\documentclass[12pt, a4paper]{article}
\usepackage{graphicx} 
\usepackage{amssymb}
\usepackage{mathrsfs}
\usepackage{amsmath}
\usepackage{amsthm}
\usepackage{appendix}
\usepackage{geometry}
\usepackage{circledsteps}
\usepackage{indentfirst}
\usepackage{xcolor}
\newtheorem{statement}{Statement}[section]

\theoremstyle{definition}
\newtheorem{definition}[statement]{Definition}

\theoremstyle{plain}
\newtheorem{theorem}[statement]{Theorem}
\newtheorem{lemma}[statement]{Lemma}
\newtheorem{proposition}[statement]{Proposition}

\newtheorem{remark}[statement]{Remark}

\numberwithin{equation}{section}

\allowdisplaybreaks[4]
\begin{document}

\title{Transportation-cost inequalities for invariant measures of stochastic reaction-diffusion equations driven by space-time white noise}

\author{Shijie Shang$^{1}$,
		Tusheng Zhang$^{1,2}$}
	\footnotetext[1]{\, School of Mathematics, University of Science and Technology of China, Hefei, China. Email: sjshang@ustc.edu.cn (Shijie Shang)}
	\footnotetext[2]{\, Department of Mathematics, University of Manchester, Manchester M13 9PL, United Kingdom. Email: tusheng.zhang@manchester.ac.uk }

\date{\today}

\maketitle

\begin{abstract}
In this paper, we establish transportation-cost inequalities for invariant measures of stochastic reaction-diffusion equations driven by multiplicative space-time white noise. In particular, the transportation-cost inequalities are established with respect to the $L^1$ metric, the $L^2$ metric and the uniform metric under different dissipativity strengths of the drift. We first obtain a global-in-time and spatially uniform moment estimate for stochastic convolutions driven by space-time white noise, which is of independent interest. We then establish Lipschitz continuity of the solutions to the associated stochastic controlled equations with respect to the controls with a time-independent Lipschitz constant. Applying this Lipschitz continuity estimate, we finally prove transportation-cost inequalities for the invariant measures of stochastic reaction-diffusion equations.

\par\vspace{3mm}

\noindent\textbf{Keywords:} Stochastic reaction-diffusion equations; transportation-cost inequalities; invariant measures; space-time white noise; moment estimates for stochastic convolutions; concentration of measure.
\vskip 0.3cm
\noindent
	{\bf AMS Subject Classification:} Primary 60H15;  Secondary 35R60.
\end{abstract}

%
%

\tableofcontents
\section{Introduction}
In this paper, we consider the stochastic reaction-diffusion equation on $[0,1]$:
\begin{align}\label{3.1}
\left\{
\begin{aligned}
 du(t,x)&=\frac{1}{2}\partial_{xx} u(t,x) dt +b(u(t,x)) dt + \sigma(u(t,x))W(dt,dx), \quad t>0, \  x\in [0,1], \\
 u(t,0)&=u(t,1)=0  , \quad\quad t> 0,\\
u(0,x)&=u_0(x), \quad x\in [0,1],
\end{aligned}
\right.
\end{align}
where $W(dt,dx)$ is a space-time white noise on a probability space $(\Omega, {\cal F}, \{{\cal F}_t\}_{t\geq 0}, \mathbb{P})$. Here $\{{\cal F}_t\}_{t\geq 0}$
is the filtration satisfying the usual conditions generated by the white noise $W$.
The coefficients $b(\cdot), \sigma(\cdot): \mathbb{R}\rightarrow \mathbb{R}$ are deterministic
measurable functions.
The initial value $u_0$ is a continuous function on $[0,1]$ satisfying $u_0(0) = u_0(1) = 0$.

The purpose of this paper is to study transportation-cost inequalities for invariant measures of equation \eqref{3.1}. To this end, we first recall the definition of a transportation-cost inequality.

\vskip 0.6cm

%
%
%
%


Let $(X, d)$ be a metric space equipped with its Borel $\sigma$-field.
Let $\mu$ and $\nu$ be two Borel probability measures on $(X, d)$. 
For $p\geq 1$, the $L^p$-Wasserstein distance between $\mu$ and $\nu$ is defined as
$$W_p(\nu, \mu):=\left[\inf \iint_{X\times X}d(x,y)^p\,\pi(\mathrm{d}x,\mathrm{d}y)\right]^{\frac{1}{p}},$$
where the infimum is taken over all probability measures $\pi$ on the product space $X\times X$ with marginals $\mu$ and $\nu$.
The relative entropy of $\nu$ with respect to $\mu$ is defined as
\[H(\nu|\mu):=\int_X \log\left(\frac{\mathrm{d}\nu}{\mathrm{d}\mu}\right)\, \mathrm{d}\nu ,\]
if $\nu$ is absolutely continuous with respect to $\mu$, and $+\infty$ otherwise.
\begin{definition}
We say that a measure $\mu$ on $(X,d)$ satisfies the $L^p$-transportation-cost inequality $T_p(C)$ if there exists a constant $C>0$ such that for all probability measures $\nu$ on $(X,d)$,
\begin{equation}\label{1.2}
 W_p(\nu, \mu)\leq \sqrt{2C H(\nu| \mu)}.
 \end{equation}
The case $p=2$ is referred to as the quadratic transportation-cost inequality, also called the Talagrand inequality.
\end{definition}

The cases of $p=1$ and $p=2$ are particularly interesting. Note that $T_p(C)$ implies $T_q(C)$ for any $q\leq p$.
We also write $T_p$ for simplicity when the constant is not emphasized.

\vskip 0.4cm

Transportation-cost inequalities are closely connected with other functional inequalities, such as Poincar\'e inequalities and logarithmic Sobolev inequalities, and they also imply concentration of measure. 
In particular, if a measure $\mu$ satisfies the transportation-cost inequality $T_1$ on $(X,d)$, then $\mu$ has normal concentration (or Gaussian tail estimates) on $(X, d)$; see, e.g., \cite{L01}. Concentration of measure has wide applications, including stochastic finance \cite{L18}, statistics \cite{M07}, machine learning \cite{MRT18} and the analysis of randomized algorithms \cite{DP09}. 
Connections between transportation-cost inequalities and other functional inequalities were studied by Otto and Villani in \cite{OV}; see also \cite{L01}. For other related interesting works, we refer the reader to \cite{BaGL,CG,GRS,GRSST18} and the references therein. We remark that 
transportation-cost inequalities and the concentration of measure phenomenon on $(X,d)$ depend on the underlying topology induced by the metric $d$. The stronger the topology, the stronger the concentration.

\vskip 0.3cm

In 1996, Talagrand \cite{T1} established $T_2(C)$ with sharp constant $C=1$ for Gaussian measures on $\mathbb{R}^d$. Since then, transportation-cost inequalities and their applications have been widely studied.
Feyel and \"Ust\"unel \cite{FU} generalized Talagrand's result \cite{T1} to the framework of abstract Wiener spaces. For stochastic differential equations (SDEs), 
Djellout, Guillin and Wu \cite{DGW04} established $T_2$ w.r.t. the $L^2$ and Cameron-Martin metrics; Wu and Zhang \cite{WZ04} later obtained $T_2$ w.r.t. the uniform metric. 
%
Transportation-cost inequalities have since been established for many other models; see, e.g., \cite{P12} for multidimensional semimartingales, including time-inhomogeneous diffusions, and \cite{W10,M10,S12} for SDEs driven by pure-jump, L\'{e}vy, or fractional noises.


For stochastic partial differential equations (SPDEs), there are a number of works on the study of transportation-cost inequalities. We recall some of them. 
Wu and Zhang \cite{WZ06} studied $T_2$ w.r.t. the $L^2$ metric using Galerkin approximations. Boufoussi and Hajji \cite{BH18} obtained $T_2$ w.r.t. the $L^2$ metric for stochastic reaction-diffusion equations (SRDEs) driven by space-time white noise and fractional noise. Meanwhile, Khoshnevisan and Sarantsev \cite{KS19} established $T_2$ for more general SPDEs w.r.t. the $L^2$ metric in the multiplicative-noise case and w.r.t. the uniform metric in the additive-noise case. 
The authors \cite{SZ19} obtained $T_2$ w.r.t. the uniform metric for SRDEs driven by multiplicative space-time white noise by establishing lower-order moment estimates for stochastic convolutions.
Wang and the second author \cite{WZ20} studied $T_2$ for SRDEs with random initial values.
Li and the authors \cite{LSZ25} established $T_2$ w.r.t. both the uniform and $L^2$ metrics for SRDEs on the whole line $\mathbb{R}$ driven by multiplicative space-time white noise.


\vskip 0.4cm


There are few results on the transportation-cost inequalities for invariant measures of SDEs and SPDEs.
To the best of the authors' knowledge, there are only two relevant results on transportation-cost inequalities for invariant measures of SRDEs.
Da Prato, Debussche, and Goldys \cite{DDG02} established logarithmic Sobolev inequalities in $L^2$ for invariant measures of SRDEs, which, in particular, imply $T_2$ w.r.t. the $L^2$ metric.
Wu and Zhang \cite{WZ06} also established $T_2$ w.r.t. the $L^2$ metric in a dissipative Hilbert-space framework using Yosida and Galerkin approximations. However, both results concern only SRDEs driven by additive space-time white noise (i.e., constant $\sigma$), and they establish $T_2$ only w.r.t. the $L^2$ metric, instead of the more natural metric---the uniform metric.
In contrast, for SRDEs driven by multiplicative space-time white noise, \cite{SZ19} and \cite{LSZ25} established $T_2$ w.r.t. the uniform metric on finite time intervals, but no transportation-cost inequality for the invariant measures has yet been established.

\vskip 0.6cm

There are three main difficulties in establishing transportation-cost inequalities w.r.t. the uniform metric for invariant measures of SRDEs driven by multiplicative space-time white noise. First, the Galerkin approximation method in the Hilbert-space framework used in \cite{WZ06} does not apply to $T_2$ w.r.t. the uniform metric.
Second, when estimating the multiplicative white noise term under the uniform metric, the existing moment estimates for stochastic convolutions driven by space-time white noise  (see, e.g., \cite{SZ19,SZZ26-1}) are not applicable here.
\cite{SZ19} established a lower-order moment estimate for stochastic convolutions under the uniform metric, but the constant involved in the estimate is dependent on time; consequently it cannot be used to establish $T_2$ for invariant measures.
Although \cite{SZZ26-1} established a global-in-time moment estimate for stochastic convolutions under the uniform metric, it requires bounded moments of $\sigma$ of order greater than $2$ and is therefore unsuitable for proving $T_2$.
%
Third, the dissipativity conditions on the coefficients are difficult to exploit because the solutions are not semimartingales and the important tool, namely the It\^{o} formula, is not available.
%

\vskip 0.6cm

Different from the existing approaches mentioned above, in this paper we work with random field solutions, rather than solutions in a Hilbert-space framework, and we prove transportation-cost inequalities for equation \eqref{3.1} directly, without Galerkin approximation. In particular, we establish the transportation-cost inequalities w.r.t. the $L^1$, $L^2$ and uniform metrics under different dissipativity strengths of the coefficient $b$.

%

We also establish a global-in-time and spatially uniform second-order moment estimate for the stochastic convolution driven by space-time white noise. This estimate plays a crucial role in proving $T_2$ w.r.t. the uniform metric and is also of independent interest. To obtain it, we establish a moment bound for Banach-space-valued stochastic integrals; see Proposition \ref{260626.1026} in the Appendix, which is also of independent interest.

To overcome the difficulty of utilizing the dissipativity conditions in proving transportation-cost inequality, because of the lack of It\^{o} formula/energy equalities, we adapt the newly found technique in our paper \cite{SZZ26-1} to fully exploit comparison principles. Moreover, we establish Lipschitz continuity of the solutions to stochastic controlled equations with respect to the controls with a time-independent Lipschitz constant, which in particular implies $T_p(C)$ for the law of the solution at every fixed time $T$ with the constant $C$ independent of time $T$.

\vskip 0.6cm

The rest of the paper is organized as follows. In Section \ref{260705.0922}, we recall the framework for stochastic reaction-diffusion equations and state the main results of the paper. In Section \ref{260705.0924}, we establish a new global-in-time and spatially uniform moment estimate for stochastic convolutions driven by space-time white noise.
In Section \ref{260705.0926}, we establish the Lipschitz continuity of the solutions of stochastic controlled equations with respect to the controls with a time-independent Lipschitz constant.
Section \ref{260705.0945} is devoted to the proof of the main result.
In the Appendix, we give an estimate of the heat semigroup with precise constants, and establish a moment estimate for Banach-valued stochastic integrals with a precise constant.

\section{Framework and main results}\label{260705.0922}

Throughout this paper, $L^p$ denotes the standard Lebesgue space $L^p([0,1])$, with norm $\Vert \cdot\Vert_{L^p}$. Let $C_0([0,1])$ be the space of continuous functions on $[0,1]$ that vanish at $0$ and $1$. We equip $C_0([0,1])$ with a metric $d$, where $d$ can be the $L^1$ metric, the $L^2$ metric or the uniform metric, more precisely, $d$ can be any of the following three metrics:
\begin{align*}
  d_1(u,v) :={}& \int_{0}^{1} |u(x) - v(x)| dx, \qquad & u,v\in C_0([0,1]) , \\
  d_2(u,v) :={}& \left(\int_{0}^{1} |u(x) - v(x)|^2 dx\right)^{\frac{1}{2}}, \qquad & u,v\in C_0([0,1]) ,\\
  d_{\infty}(u,v) :={}& \sup_{x\in [0,1]} |u(x) - v(x)|, \qquad & u,v\in C_0([0,1]) .
\end{align*}


%

\vskip 0.3cm

We say that an adapted,
continuous random field $\{u(t,x): (t,x)\in \mathbb{R}_{+}\times [0,1]\}$ is a solution of the stochastic partial differential equation (SPDE) \eqref{3.1}
if, for every $t\geq 0$ and $\phi \in C_0^2([0, 1])$,
\begin{align*}
&\int_0^1u(t,x)\phi(x)\,\mathrm{d}x=\int_0^1u_0(x)\phi(x)\,\mathrm{d}x
+\frac{1}{2}\int_0^t\,\mathrm{d}s\int_0^1u(s,x)\phi^{\prime\prime}(x)\,\mathrm{d}x \\
&+\int_0^t\,\mathrm{d}s\int_0^1b(u(s,x))\phi(x)\,\mathrm{d}x+ \int_0^t\int_0^1\sigma(u(s,x))\phi(x)\,W(\mathrm{d}s,\mathrm{d}x),\quad \mathbb{P}\text{-a.s.}.
\end{align*}
It was shown in \cite{Wa} that $u$ is a solution of SPDE (\ref{3.1}) if and only
if it satisfies the following integral equation for every $t\geq 0$: 
\begin{align*}
u(t,x)=&P_tu_0(x)+\int_0^t\int_0^1p_{t-s}(x,y)b(u(s,y))\, \mathrm{d}s\mathrm{d}y\nonumber\\
&+  \int_0^t\int_0^1p_{t-s}(x,y)\sigma(u(s,y))\, W(\mathrm{d}s,\mathrm{d}y), \quad \mathbb{P}\text{-a.s.},
\end{align*}
where $p_{t}(x,y)$ is the heat kernel of the operator $\frac{1}{2}\partial_{xx}$ on $[0,1]$ with the Dirichlet boundary condition, and
\begin{align}\label{260603.1931}
 P_t f(x) := \int_{0}^{1} p_{t}(x,y) f(y)dy, \quad x\in [0,1].
\end{align}
%
%
%
%
%
%
Throughout the paper, we use the following heat kernel estimates:
\begin{gather}
\label{260207.1152}
  \int_{0}^{1} p_t(x,y) dx \leq 1 , \quad \forall\ t>0, \  y\in [0,1],\\
  \label{260207.2020}
  \int_{0}^{1} p_t(x,y)^2  dx \leq \frac{1}{2\sqrt{\pi t}}, \quad \forall\ t>0, \  y\in [0,1].
\end{gather}

\vskip 0.4cm
We now introduce the following hypotheses.
\vskip 0.4cm
\noindent {\bf (H1)} $\sigma$ is Lipschitz, i.e., there exists a constant $L_{\sigma}\geq 0$ such that
\begin{align*}
|\sigma(x) - \sigma(y)| \leq L_{\sigma} |x-y|, \quad\forall\  x,y\in\mathbb{R}.
\end{align*}
\noindent {\bf (H2)} $b$ is dissipative, i.e., for some $\alpha > -\frac{1}{2}\pi^2$,
\begin{align*}
  (b(x) - b(y))(x-y) \leq -\alpha (x-y)^2, \quad\forall\  x,y\in\mathbb{R}.
\end{align*}
\noindent {\bf (H3)} $\sigma$ is bounded, i.e., there exists a constant $K_{\sigma}\geq 0$ such that
\begin{align}
|\sigma(x)| \leq\, & K_{\sigma},\quad \forall\  x\in\mathbb{R}.
\end{align}


\vskip 0.3cm

Boundedness of the diffusion coefficient $\sigma$ is not required for the existence and uniqueness of solutions to equation \eqref{3.1}; it is used here only to prove the transportation-cost inequalities. 
%
To state the precise result, let us introduce some constants.
\vskip 0.3cm
For $\alpha>-\frac{1}{2}\pi^2$, define
\begin{align*}
  F_1(\alpha):={}& \sum_{n=1}^{\infty}\frac{1}{n^2\pi^2 + 2\alpha} ,\\
  F_2(\alpha):={}& 2\sum_{k=0}^{\infty} \frac{1}{(2k+1)^2\pi^2 + 2\alpha}.
\end{align*}
Let
\begin{align}\label{260708.0015}
\alpha_2^* :=
\left\{
\begin{aligned}
 & \text{the unique solution to}\  F_2(\alpha)L_{\sigma}^2 =1,  \quad &\text{when} \ L_{\sigma}>0, \\
 & -\frac{1}{2}\pi^2  , \quad &\text{when}\ L_{\sigma} = 0.\\
\end{aligned}
\right.
\end{align}
For $p>2$, define
\begin{align*}
  K_p:=\min_{\frac{1}{2p} < \theta< \frac{1}{4}} \left\{\left|\frac{\sin\pi\theta}{\pi}\right|^2 (2\pi)^{-\frac1{p}} \left(\frac{p-1}{p}\right)^{\frac{p-1}{p}}  2^{2\theta - \frac{1}{2}} \left( \Gamma\big(\theta - \frac{1}{2p}\big) \right)^2 \times \frac{p-1}{2\sqrt{\pi}} \Gamma\big(\frac{1}{2} - 2\theta\big) \right\},
\end{align*}
where $\Gamma(\cdot)$ is the gamma function. Define
\begin{align}\label{260702.1712}
\alpha_{\infty}^{*}:={}& \inf_{p>2} \left(K_p L_{\sigma}^2\right)^{\frac{2p}{p-2}}, \\
   F_3(\alpha):={}& \inf_{p>2} \frac{K_p }{\alpha^{\frac{p-2}{2p}}}. \nonumber
\end{align}
Set
  \begin{align}
\label{260708.0014}   B_1 :={} & 32K_{\sigma}^2\sum_{k=0}^{\infty} \frac{1}{(2k+1)^2 \pi^2 [(2k+1)^2\pi^2 + 2\alpha]} , \quad &&\text{for}\ \alpha > -\frac{1}{2}\pi^2, \\
\label{260702.1652}  B_2 :={} & 4K_{\sigma}^2 C_2(\alpha)^2, \quad C_2(\alpha) :=  \left(1-  \sqrt{F_2(\alpha) L_{\sigma}^2} \right)^{-1} \sqrt{F_1(\alpha)}, \quad &&\text{for}\ \alpha > \alpha_2^*, \\
\label{260702.1713} B_{\infty}:={} & 4K_{\sigma}^2C_{\infty}(\alpha)^2, \quad C_{\infty}(\alpha) := \left(1 - \sqrt{F_3(\alpha) L_{\sigma}^2} \right)^{-1} \sqrt{F_2(\alpha)}, \quad &&\text{for}\ \alpha > \alpha_{\infty}^*.
\end{align}

In addition, we introduce the following condition.
\begin{itemize}
  \item [{\bf (S)}] There exists an initial value $u_0$ such that equation \eqref{3.1} has a unique solution $u$. Moreover, the law of $u(T)$ on $(C_0([0,1]), d)$ converges weakly to an invariant measure $\mu$ of equation \eqref{3.1} as $T\rightarrow\infty$.
\end{itemize}

\begin{remark}
The condition (S) holds under mild assumptions. For instance, it holds if we suppose in addition that the coefficient $b$ is locally Lipschitz and satisfies
  \begin{align}\label{260607.1313}
    |b(x) - b(y)| \leq L_b |x-y|(1+|x|^{\nu-1}+|y|^{\nu-1}),\quad x,y\in\mathbb{R},
  \end{align}
for some constant $L_b\geq 0$, where $\nu\geq 1$; see \cite{SZZ26-1,SZZ26-2}.

\end{remark}


\begin{theorem}\label{260701.1010}

\begin{itemize}
  \item [(i)] Suppose that (H2) and (H3) hold. If $\alpha>-\frac{1}{2}\pi^2$ and the condition (S) holds with the metric $d=d_1$, then the invariant measure $\mu$ satisfies the transportation-cost inequality $T_1(B_1)$ on the space $(C_0([0,1]), d_1)$ with the constant $B_1$ given by \eqref{260708.0014}.

  \item [(ii)] Suppose that (H1), (H2) and (H3) hold. If $\alpha>\alpha_2^*$, where $\alpha_2^*$ is defined in \eqref{260708.0015}, and the condition (S) holds with the metric $d=d_2$, then the invariant measure $\mu$ satisfies the quadratic transportation-cost inequality $T_2(B_2)$ on the space $(C_0([0,1]), d_2)$ with the constant $B_2$ given by \eqref{260702.1652}.

  \item [(iii)] Suppose that (H1), (H2) and (H3) hold.
  If $ \alpha> \alpha_{\infty}^{*}$, where $\alpha_{\infty}^{*}$ is defined in \eqref{260702.1712},
and the condition (S) holds with the metric $d=d_{\infty}$, then the invariant measure $\mu$ satisfies the quadratic transportation-cost inequality $T_2(B_{\infty})$ on the space $(C_0([0,1]), d_{\infty})$ with the constant $B_{\infty}$ given by \eqref{260702.1713}.

\end{itemize}

\end{theorem}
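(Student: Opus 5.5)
The plan follows the Girsanov/control approach of Djellout--Guillin--Wu, as adapted in \cite{SZ19}. First I prove $T_p(B)$ for the law $\mu_T$ of $u(T)$, where $u$ is the solution from condition (S), with a constant $B$ independent of $T$. Then I let $T\to\infty$.

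Fix $T$, and let $\nu\ll\mu_T$ with $H(\nu|\mu_T)<\infty$. Set $\mathbb{Q}:=\frac{d\nu}{d\mu_T}(u(T))\,\mathbb{P}$, so that $H(\mathbb{Q}|\mathbb{P})=H(\nu|\mu_T)$. By the representation of densities on Wiener space (Girsanov), there is a predictable $h$ with
\[
\frac12\mathbb{E}^{\mathbb{Q}}\int_0^T\!\!\int_0^1 h(s,y)^2\,dy\,ds = H(\mathbb{Q}|\mathbb{P}),
\]
and $\tilde W(ds,dy)=W(ds,dy)-h\,ds\,dy$ is a white noise under $\mathbb{Q}$. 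Under $\mathbb{Q}$, the process $u$ solves the controlled equation
\[
du=\tfrac12\partial_{xx}u\,dt+b(u)\,dt+\sigma(u)h\,dt+\sigma(u)\tilde W(dt,dx).
\]
Let $v$ solve the uncontrolled equation driven by $\tilde W$ with the same initial value $u_0$. Then $(u(T),v(T))$ under $\mathbb{Q}$ is a coupling of $(\nu,\mu_T)$. It therefore suffices to prove the time-uniform Lipschitz estimate
\[
\mathbb{E}^{\mathbb{Q}}\, d(u(T),v(T))^p\le \Big(B\,\mathbb{E}^{\mathbb{Q}}\int_0^T\!\!\int_0^1 h^2\Big)^{p/2}.
\]
This is the content of Section \ref{260705.0926}, which I take as given. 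For $p=1$ Jensen is used, and for $p=2$ the estimate gives $W_2(\nu,\mu_T)^2\le 2B\,H(\nu|\mu_T)$ directly.

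The structure of that estimate is as follows. Write $w=u-v$. It satisfies a mild equation with drift difference $b(u)-b(v)$, a forcing term $\sigma(u)h$ bounded by $K_\sigma|h|$, and a noise term $(\sigma(u)-\sigma(v))\tilde W$.
\begin{itemize}
\item \emph{Case (i).} No Lipschitz bound on $\sigma$ is available, so I use comparison: $|w|$ is dominated by the solution of the linear equation $\partial_t z=\frac12\partial_{xx}z-\alpha z+K_\sigma|h|$ (in an appropriate sense). Then $\int_0^1|w(T)|\,dx$ is bounded by Cauchy--Schwarz using $\int_0^1 e^{-\alpha t}p_t(x,y)\,dx$. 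Expanding in the eigenbasis $\sqrt2\sin(n\pi x)$ of the Dirichlet Laplacian, and noting that $\int_0^1\sqrt2\sin(n\pi x)\,dx$ vanishes for even $n$ and equals $2\sqrt2/(n\pi)$ for odd $n$, produces exactly the sum defining $B_1$.
\item \emph{Case (ii).} The drift gain is the same, giving $F_1(\alpha)$. The noise term contributes through Itô isometry with $\int_0^\infty e^{-2\alpha t}\sup_y\int_0^1 p_t(x,y)^2\,dx\,dt$-type quantities, which yield $F_2(\alpha)L_\sigma^2$. A fixed-point/Gronwall-type absorption in $L^2$ norm then gives the factor $(1-\sqrt{F_2L_\sigma^2})^{-1}$.
\item \emph{Case (iii).} I use the global-in-time uniform moment estimate for stochastic convolutions from Section \ref{260705.0924}, via the factorization method with exponent $\theta$ and the Banach-space Burkholder bound of Proposition \ref{260626.1026}. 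This produces the constant $K_p$ and hence $F_3(\alpha)$, and the same absorption gives $C_\infty(\alpha)$.
\end{itemize}

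The limit $T\to\infty$ is handled as follows. For fixed $\nu$ with $H(\nu|\mu)<\infty$, the quantity $H(\nu|\mu_T)$ does not converge to $H(\nu|\mu)$ in general, so I use the dual (Bobkov--Götze type) formulation.
\begin{itemize}
\item For $T_1$: the inequality is equivalent to $\int e^{\lambda f}\,d\mu\le e^{\lambda\int f\,d\mu+B\lambda^2/2}$ for all bounded $d$-Lipschitz $f$ with constant $\le1$.
\item For $T_2$: the inequality is equivalent to $\int e^{Q f}\,d\mu\le e^{\int f\,d\mu}$, where $Qf(x)=\inf_y\{f(y)+d(x,y)^2/(2B)\}$ (Bobkov--Götze/Gozlan).
\end{itemize}
Both sides involve only integrals of bounded continuous functions; here $Qf$ is bounded and $d$-Lipschitz when $f$ is bounded Lipschitz. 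Hence they pass to the weak limit $\mu_T\Rightarrow\mu$ given by (S), and the reduction to bounded Lipschitz $f$ is standard.

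The main obstacle is the time-uniform Lipschitz estimate in case (iii). This is where the uniform-norm moment bound with a $T$-independent constant and only second moments of $h$ is essential. Once that estimate is in hand, the remaining steps are routine.
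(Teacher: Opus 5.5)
Your proof is correct and is essentially the paper's. It uses the Girsanov representation of $\nu\ll\mu_T$ via the lemma of \cite{KS19} and the coupling $(u(T),v(T))$ under $\mathbb{Q}_T$. It then applies Theorem~\ref{260206.2029} with controls $0$ and $\sigma(u)h$, bounded by $K_\sigma|h|$, which yields the $T$-independent constants $B_1,B_2,B_\infty$. The one difference is the passage to the weak limit: you justify it through Bobkov--G\"otze duality, where the paper simply cites Lemma 2.2 of \cite{DGW04}.

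One caveat concerns your sketch of case (i), which does not affect the argument since you take Section~\ref{260705.0926} as given. $|u-v|$ is not pathwise dominated by a deterministic linear equation, because the noise difference $(\sigma(u)-\sigma(v))\widetilde{W}$ does not vanish. The paper's mechanism, used in all three cases, is different. It orders the solutions by comparison, using an intermediate control $h^1\vee h^2$. It then discards the drift difference because $b_\alpha=b+\alpha\,\mathrm{id}$ is decreasing. Finally, it either takes expectations so the stochastic integral drops out (case (i)) or absorbs the noise term (cases (ii) and (iii)).
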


\section{Uniform estimate of stochastic convolutions}\label{260705.0924}

We first establish a global-in-time and spatially uniform estimate for stochastic convolutions driven by space-time white noise. This estimate is of independent interest and plays a crucial role in the next section.
%
\begin{theorem}\label{estimates 001}
  Let $\{\sigma(s,y): (s,y)\in\mathbb{R}_+\times [0,1]\}$ be a progressively measurable random field such that the following stochastic integral is well-defined. Then for any $\alpha>0$, $T\geq 0$, and $p>2$,
  \begin{align}\label{260617.2015}
     \sup_{t\in [0,T]} \mathbb{E} \sup_{x\in [0,1]}\left|\int_0^t\int_0^1 e^{-\alpha (t-s)}p_{t-s}(x,y)\sigma(s,y)\,W(ds,dy) \right|^2
    \leq   \frac{K_p}{\alpha^{\frac{p-2}{2p}}} \sup_{s\in [0,T]} \mathbb{E}\sup_{y\in[0,1]}| \sigma(s,y)|^2,
  \end{align}
where the constant $K_p$ is independent of $T$ and defined by
\begin{align}\label{260630.1705}
  K_p:=\min_{\frac{1}{2p} < \theta< \frac{1}{4}} \left\{\left|\frac{\sin\pi\theta}{\pi}\right|^2 (2\pi)^{-\frac1{p}} \left(\frac{p-1}{p}\right)^{\frac{p-1}{p}}  2^{2\theta - \frac{1}{2}} \left( \Gamma\big(\theta - \frac{1}{2p}\big) \right)^2 \times \frac{p-1}{2\sqrt{\pi}} \Gamma\big(\frac{1}{2} - 2\theta\big) \right\},
\end{align}
%
where $\Gamma(\cdot)$ is the gamma function.
\end{theorem}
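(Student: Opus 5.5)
We will use the factorization method of Da Prato--Kwapie\'n--Zabczyk. The structure of $K_p$ points to this choice. The factor $|\sin\pi\theta/\pi|^2$ comes from the identity $\int_s^t (t-r)^{\theta-1}(r-s)^{-\theta}dr = \pi/\sin\pi\theta$. The Gamma factors come from integrals of the form $\int_0^\infty r^{\beta-1}e^{-\alpha r}dr$.

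\textbf{Step 1: factorization.} Fix $\theta\in(\frac{1}{2p},\frac14)$ and write $S_\alpha(t):=e^{-\alpha t}P_t$. Define
\[
Y_\theta(r,y):=\int_0^r\int_0^1 (r-s)^{-\theta}e^{-\alpha(r-s)}p_{r-s}(y,z)\sigma(s,z)\,W(ds,dz).
\]
By stochastic Fubini and the semigroup property, the stochastic convolution is
\[
Z(t,x)=\frac{\sin\pi\theta}{\pi}\int_0^t (t-r)^{\theta-1}e^{-\alpha(t-r)}\,P_{t-r}Y_\theta(r)(x)\,dr .
\]

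\textbf{Step 2: the deterministic operator.} We bound $\sup_x|Z(t,x)|$ in two stages.

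First, use the $L^p\to L^\infty$ smoothing of the Dirichlet heat semigroup: $\|P_u f\|_{L^\infty}\le (2\pi u)^{-1/(2p)}\|f\|_{L^p}$. This follows from H\"older's inequality together with $p_u(x,y)\le(2\pi u)^{-1/2}e^{-|x-y|^2/2u}$. It produces the factor $(2\pi)^{-1/p}$ after squaring, and presumably this is the Appendix estimate with precise constants.

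Second, apply H\"older's inequality in $r$ with exponents $p/(p-1)$ and $p$:
\[
\sup_x|Z(t,x)|\le \frac{\sin\pi\theta}{\pi}(2\pi)^{-\frac1{2p}}\Big(\int_0^t (t-r)^{(\theta-1-\frac1{2p})\frac{p}{p-1}}e^{-\alpha(t-r)}dr\Big)^{\frac{p-1}{p}}\Big(\int_0^t e^{-\alpha(t-r)}\|Y_\theta(r)\|_{L^p}^p dr\Big)^{\frac1p}.
\]
To make this work, the exponential weight must be split between the two H\"older factors, for example as $e^{-\alpha(t-r)}=e^{-\alpha(t-r)\frac{p-1}{p}}e^{-\alpha(t-r)\frac1p}$. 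The first integral is then bounded by a Gamma function times a power of $\alpha$. The constants $\left(\frac{p-1}{p}\right)^{\frac{p-1}{p}}$ and $\Gamma(\theta-\frac1{2p})^2$ should emerge from this step once the split and the rescaling $r\mapsto \frac{p}{p-1}r$ are chosen correctly. Fine-tuning this split so that it matches $K_p$ exactly is bookkeeping.

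\textbf{Step 3: moments of $Y_\theta$.} Square the bound from Step 2 and take expectations. Since the right-hand side involves $(\int\|Y\|_p^p)^{2/p}$ with $2/p<1$, Jensen's inequality (concavity) gives
\[
\mathbb{E}(\cdot)^{2/p}\le(\mathbb{E}\,\cdot)^{2/p}.
\]
We still need $\mathbb{E}\|Y_\theta(r)\|_{L^p}^p$ in terms of second moments of $\sigma$ only. This is the main obstacle, because pointwise BDG would require $p$-th moments of $\sigma$. The Appendix result, Proposition~\ref{260626.1026}, resolves this. It is a Banach-space ($L^p$-valued) stochastic integral estimate, which we will use in the form
\[
\mathbb{E}\|Y\|_{L^p}^2\lesssim (p-1)\,\mathbb{E}\!\int \|\text{integrand}\|^2_{\gamma\text{ or }L^p(L^2)} .
\]
The factor $p-1$ in $K_p$ matches the martingale type-2 constant of $L^p$. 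Accordingly, we will restructure Step 3 to work with $\mathbb{E}\|Y(r)\|_{L^p}^2$. This means using Minkowski's inequality in $L^{p/2}(\Omega)$ in the reverse direction, or working directly in $L^2(\Omega;L^p(dr\,dx))$.

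The integrand's norm is bounded by
\[
(r-s)^{-2\theta}e^{-2\alpha(r-s)}\sup_y\int_0^1 p_{r-s}(y,z)^2dz\;\sup_z|\sigma(s,z)|^2 .
\]
By \eqref{260207.2020}, this is at most $(r-s)^{-2\theta-\frac12}e^{-2\alpha(r-s)}(2\sqrt\pi)^{-1}\sup_z|\sigma|^2$. Integrating in $s$ yields
\[
\frac{1}{2\sqrt\pi}\,\Gamma\big(\tfrac12-2\theta\big)(2\alpha)^{2\theta-\frac12}\sup_s\mathbb{E}\sup_z|\sigma(s,z)|^2 .
\]
This accounts for the factors $2^{2\theta-\frac12}$, $\frac{p-1}{2\sqrt\pi}$, and $\Gamma(\frac12-2\theta)$ in $K_p$.

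\textbf{Step 4: collecting powers of $\alpha$.} The total power of $\alpha$ should be
\[
2\Big(\frac{1}{2p}-\theta\Big)+\Big(2\theta-\frac12\Big)=-\frac{p-2}{2p},
\]
so the $\theta$-dependence cancels, as required. Taking $\sup_t$ and then minimizing over $\theta$ gives \eqref{260617.2015}.
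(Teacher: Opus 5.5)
There is a genuine gap, and it is in the second stage of your Step 2. Hölder's inequality in $r$ with exponents $\frac{p}{p-1}$ and $p$ needs $(\theta-1-\frac{1}{2p})\frac{p}{p-1}>-1$, i.e. $\theta>\frac{3}{2p}$, not $\theta>\frac{1}{2p}$.

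\begin{itemize}
\item Your first factor is therefore infinite on part of the range over which $K_p$ is minimized. For $2<p\le 6$ there is no admissible $\theta<\frac14$ at all.
\item No way of splitting $e^{-\alpha(t-r)}$ repairs this. Such an integral also produces $\Gamma\big((\theta-1-\frac{1}{2p})\frac{p}{p-1}+1\big)^{2(p-1)/p}$ rather than $\Gamma(\theta-\frac{1}{2p})^2$, so matching $K_p$ is not bookkeeping.
\item The second Hölder factor is the more serious problem. It forces you to control $\mathbb{E}\big(\int_0^t e^{-\alpha(t-r)}\|Y_\theta(r)\|_{L^p}^p\,dr\big)^{2/p}$. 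In general this cannot be bounded by $\sup_r\mathbb{E}\|Y_\theta(r)\|_{L^p}^2$, and hence not by second moments of $\sigma$.
\end{itemize}

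Neither of your proposed repairs works. There is no ``reverse Minkowski'' in $L^{p/2}(\Omega)$, since Jensen goes the other way. If you instead regard $Y_\theta$ on $[0,t]$ as an $L^p(dr\,dx)$-valued stochastic integral and apply Proposition \ref{260626.1026}, the square function behaves like $(r-s)^{-\theta-\frac14}$ near $r=s$ (e.g. for $\sigma\equiv1$). You would then need $(\theta+\frac14)p<1$, which is incompatible with $\theta>\frac{3}{2p}$.

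The missing idea is to drop Hölder in time and use Minkowski's integral inequality in $L^2(\Omega)$. After the $L^p\to L^\infty$ smoothing you have $\sup_x|Z(t,x)|\le\frac{\sin\pi\theta}{\pi}C_{p\rightarrow\infty}\int_0^t(t-r)^{\theta-1-\frac{1}{2p}}e^{-\alpha(t-r)}\|Y_\theta(r)\|_{L^p}\,dr$. Take the $L^2(\Omega)$-norm and move it inside the $dr$-integral to get
\[
\Big\|\sup_{x\in[0,1]}|Z(t,x)|\Big\|_{L^2(\Omega)}\le\frac{\sin\pi\theta}{\pi}\,C_{p\rightarrow\infty}\,\Gamma\Big(\theta-\frac{1}{2p}\Big)\,\alpha^{\frac{1}{2p}-\theta}\sup_{r\in[0,T]}\Big(\mathbb{E}\|Y_\theta(r)\|_{L^p}^2\Big)^{\frac12}.
\]
This needs only $\theta>\frac{1}{2p}$ and only second moments of $\|Y_\theta(r)\|_{L^p}$. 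Your Step 3 (Proposition \ref{260626.1026} plus \eqref{260207.2020}) then bounds these exactly as in the paper. This step is where $\Gamma(\theta-\frac{1}{2p})^2$ and $\alpha^{\frac1p-2\theta}$ come from.

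Finally, the factor $(\frac{p-1}{p})^{\frac{p-1}{p}}$ does not come from splitting the exponential. It is part of the sharp smoothing constant $C_{p\rightarrow\infty}=(2\pi)^{-\frac{1}{2p}}(\frac{p-1}{p})^{\frac{p-1}{2p}}$, which comes from the exact $L^{p/(p-1)}$-norm of the Gaussian upper bound. You discarded it when you wrote the valid but weaker bound $(2\pi u)^{-1/(2p)}$.
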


\begin{remark}
An upper bound of $K_{p}$ is
\begin{align}
  K_{p}< \frac{p-1}{\pi^{2}} \left(\frac{1}{2\sqrt{\pi}}\right)^{\frac{2}{p}+1} \left(\Gamma\Big(\frac{p-2}{6p}\Big)\right)^{3} .
\end{align}
This follows from \eqref{260626.1024} below and the identity
\begin{align}\label{260621.1425}
  \min_{\frac{1}{2p}<\theta<\frac{1}{4}} \left[\Big(\Gamma\Big(\theta -\frac{1}{2p}\Big)\Big)^2 \times \Gamma\Big(\frac{1}{2} -2\theta\Big)\right] = \left(\Gamma\Big(\frac{p-2}{6p}\Big)\right)^{3}.
\end{align}
The above minimum holds because
\[
\frac{d}{d\theta}\log \left[\Gamma\Big(\theta -\frac{1}{2p}\Big) \times \Big(\Gamma\Big(\frac{1}{2} -2\theta\Big)\Big)^{\frac{1}{2}}\right] = \psi\big(\theta -\frac{1}{2p}\big) - \psi\big(\frac{1}{2}-2\theta\big),
\]
where $\psi(x) = \frac{\Gamma^{\prime}(x)}{\Gamma(x)}$ is the digamma function, which is strictly increasing on $(0,\infty)$. 
It follows that the minimum in \eqref{260621.1425} is attained when $\theta -\frac{1}{2p} = \frac{1}{2} -2\theta$, or equivalently, when $\theta = \frac{p+1}{6p}$.
\end{remark}

%

\begin{proof}

Without loss of generality, we may assume that the right-hand side of \eqref{260617.2015} is finite.
We employ the factorization method (see e.g. \cite{PG-2}). Choose $\theta$ such that $\frac{1}{2p}<\theta<\frac{1}{4}$. This is possible because $p>2$. Let
\begin{align*}
  (J_{\theta}\sigma)(s,y):&= \int_0^s\int_0^1 (s-r)^{-\theta} e^{-\alpha (s-r)}p_{s-r}(y,z)\sigma(r,z)\,W(\mathrm{d}r,\mathrm{d}z), \\
  (J^{\theta-1}f)(t,x):&= \frac{\sin\pi\theta}{\pi}\int_0^t\int_0^1 (t-s)^{\theta-1} e^{-\alpha (t-s)} p_{t-s}(x,y)f(s,y)\,\mathrm{d}s\mathrm{d}y.
\end{align*}
By the stochastic Fubini theorem (see Theorem 2.6 in \cite{Wa}), for every $(t,x)\in\mathbb{R}_+\times[0,1]$,
\begin{align*}
  \int_0^t\int_0^1 e^{-\alpha(t-s)}p_{t-s}(x,y)\sigma(s,y)\,W(\mathrm{d}s,\mathrm{d}y)=J^{\theta-1}(J_{\theta}\sigma)(t,x).
\end{align*}

\noindent Therefore
\begin{align*}
   \sup_{x\in [0,1]}\left|\int_0^t\int_0^1 e^{-\alpha(t-s)} p_{t-s}(x,y)\sigma(s,y)\,W(\mathrm{d}s,\mathrm{d}y)\right|
  = \sup_{x\in [0,1]}\left|J^{\theta-1}(J_{\theta}\sigma)(t,x)\right|, \quad \mathbb{P}\text{-a.s.}.
\end{align*}
By Lemma \ref{260702.1741} in the Appendix and Minkowski's inequality, we have for any $t\in [0,T]$,
{\allowdisplaybreaks\begin{align}\label{104.1}
  & {\mathbb{E}}\sup_{x\in [0,1]}\left|\int_0^t\int_0^1 e^{-\alpha(t-s)}p_{t-s}(x,y)\sigma(s,y)\,W(\mathrm{d}s,\mathrm{d}y)\right|^2 \nonumber\\
  =& {\mathbb{E}}\sup_{x\in [0,1]}\left|\frac{\sin\pi\theta}{\pi} \int_0^t\int_0^1 (t-s)^{\theta-1} e^{-\alpha(t-s)} p_{t-s}(x,y)J_{\theta}\sigma(s,y)\,\mathrm{d}s\mathrm{d}y\right|^2 \nonumber\\
  \leq & \left|\frac{\sin\pi\theta}{\pi}\right|^2 {\mathbb{E}}\bigg\{\int_0^t (t-s)^{\theta-1} e^{-\alpha(t-s)} \nonumber\\
  &~~~~~~~~~~~~~\times \sup_{x\in [0,1]}\left|\int_0^1 p_{t-s}(x,y) J_{\theta}\sigma(s,y) \,\mathrm{d}y\right|\,\mathrm{d}s\bigg\}^2 \nonumber\\
  \leq & \left|\frac{\sin\pi\theta}{\pi}\right|^2 {\mathbb{E}}\Bigg\{\int_0^t (t-s)^{\theta-1-\frac{1}{2p}} e^{-\alpha(t-s)} C_{p\rightarrow\infty} \Vert J_{\theta}\sigma(s,\cdot)\Vert_{L^p} \,\mathrm{d}s\Bigg\}^2 \nonumber\\
  \leq & \left|\frac{\sin\pi\theta}{\pi}\right|^2 \Bigg\{\int_0^t (t-s)^{\theta-1-\frac{1}{2p}} e^{-\alpha(t-s)} C_{p\rightarrow\infty} \big\Vert \Vert J_{\theta}\sigma(s,\cdot)\Vert_{L^p} \big\Vert_{L^2(\Omega)}  \,\mathrm{d}s\Bigg\}^2 \nonumber\\
  \leq & \left|\frac{\sin\pi\theta}{\pi}\right|^2  C_{p\rightarrow\infty}^2 \times \Bigg\{\int_0^t (t-s)^{\theta-1-\frac{1}{2p}} e^{-\alpha(t-s)} \,\mathrm{d}s\Bigg\}^2 \times \sup_{s\in [0,T]} \mathbb{E}\Vert J_{\theta}\sigma(s,\cdot)\Vert_{L^p} ^2 \nonumber\\
    \leq & C_{\alpha,p,\theta}^{\prime} \sup_{s\in [0,T]} \mathbb{E}\Vert J_{\theta}\sigma(s,\cdot)\Vert_{L^p} ^2 ,
\end{align}}
where we have used the condition $\theta >\frac{1}{2p}$, so that
\begin{align}\label{260617.1328}
  C_{\alpha, p, \theta}^{\prime}= & \left|\frac{\sin\pi\theta}{\pi}\right|^2 C_{p\rightarrow\infty}^2 \times \left(\int_0^{\infty} s^{\theta-1-\frac{1}{2p}} e^{-\alpha s}\,\mathrm{d}s\right)^{2} \nonumber\\
  = & \left|\frac{\sin\pi\theta}{\pi}\right|^2 (2\pi)^{-\frac1{p}} \left(\frac{p-1}{p}\right)^{\frac{p-1}{p}}  \left(\frac{1}{\alpha^{\theta - \frac{1}{2p}}} \Gamma\big(\theta - \frac{1}{2p}\big) \right)^2.
\end{align}

Next, we estimate $\mathbb{E}\Vert J_{\theta}\sigma(s,\cdot)\Vert_{L^p} ^2$. We write $J_{\theta}\sigma$ as
\begin{align}
  J_{\theta}\sigma(s,\cdot):={}& \int_0^s\int_0^1 (s-r)^{-\theta} e^{-\alpha(s-r)} p_{s-r}(\cdot,z)\sigma(r,z)\,W(\mathrm{d}r,\mathrm{d}z) \nonumber\\
  ={}&  \int_0^s (s-r)^{-\theta} e^{-\alpha(s-r)} T_{s-r}\sigma(r) \mathrm{d}W(r), \quad \ s\in[0,T],
\end{align}
where $\{W(r)\}_{r\geq 0}$ is the $L^2$-cylindrical Brownian motion associated with the space-time white noise $W$,
and the bounded linear operator $T_{s-r}\sigma(r): L^2\rightarrow L^p$ is defined by
\begin{align*}
  \big[\big(T_{s-r}\sigma(r) \big)g\big](\cdot):= \int_{0}^{1} p_{s-r}(\cdot,z)\sigma(r,z) g(z) dz,\quad \forall\ g\in L^2.
\end{align*}
By Proposition \ref{260626.1026} in the Appendix, we have
\begin{align}\label{260625.1056}
  \mathbb{E} \Vert J_{\theta}\sigma(s,\cdot) \Vert_{L^p}^2 \leq (p-1)\mathbb{E} \int_0^s (s-r)^{-2\theta} e^{-2\alpha(s-r)} \bigg\Vert \bigg( \sum_{k=1}^{\infty} \big|\big[\big(T_{s-r}\sigma(r) \big) e_k\big](\cdot) \big|^2 \bigg)^{1/2} \bigg\Vert_{L^p}^2 dr ,
\end{align}
where $\{e_k\}_{k=1}^{\infty}$ is an orthonormal basis of $L^2$.
By Parseval's identity and \eqref{260207.2020},
\begin{align}\label{260625.1057}
  \sum_{k=1}^{\infty} \big|\big(\big(T_{s-r}\sigma(r) \big) e_k\big)(y) \big|^2 = & \sum_{k=1}^{\infty} \left|\int_{0}^{1} p_{s-r}(y,z)\sigma(r,z) e_k(z) dz \right|^2 \nonumber\\
  = & \int_{0}^{1} |p_{s-r}(y,z) \sigma(r,z)|^2 dz \nonumber\\
  \leq & \sup_{z\in [0,1]} |\sigma(r,z)|^2 \times \int_{0}^{1} |p_{s-r}(y,z)|^2 dz \nonumber\\
  \leq & \frac{1}{2\sqrt{\pi (s-r)}} \sup_{z\in [0,1]} |\sigma(r,z)|^2, \qquad \forall\ y\in [0,1].
\end{align}
Combining \eqref{260625.1056} and \eqref{260625.1057} yields
\begin{align*}
  \mathbb{E} \Vert J_{\theta}\sigma(s,\cdot) \Vert_{L^p}^2 \leq & \frac{p-1}{2\sqrt{\pi}} \int_0^s (s-r)^{-2\theta-\frac{1}{2}} e^{-2\alpha(s-r)} \mathbb{E}\Big[\sup_{z\in [0,1]} |\sigma(r,z)|^2\Big] dr .
\end{align*}
Hence
\begin{align}\label{260625.1112}
  \sup_{s\in [0,T]} \mathbb{E}\Vert J_{\theta}\sigma(s,\cdot)\Vert_{L^p}^2 \leq \frac{p-1}{2\sqrt{\pi}} \left(\frac{1}{2\alpha}\right)^{\frac{1}{2}-2\theta} \Gamma\big(\frac{1}{2} - 2\theta\big) \sup_{r\in [0,T]} \mathbb{E}\Big[\sup_{z\in [0,1]} |\sigma(r,z)|^2\Big] .
\end{align}
Combining \eqref{104.1} and \eqref{260625.1112} gives
\begin{align*}
  & \sup_{t\in [0,T]}\mathbb{E}\sup_{x\in [0,1]}\left|\int_0^t\int_0^1 e^{-\alpha(t-s)}p_{t-s}(x,y)\sigma(s,y)\,W(\mathrm{d}s,\mathrm{d}y)\right|^2
  \leq  C_{\alpha,p} \sup_{r\in [0,T]} \mathbb{E}\Big[\sup_{z\in [0,1]} |\sigma(r,z)|^2\Big] ,
\end{align*}
where
\begin{align*}
  C_{\alpha,p}:={}& \min_{\frac{1}{2p} < \theta< \frac{1}{4}} \left\{\left|\frac{\sin\pi\theta}{\pi}\right|^2 (2\pi)^{-\frac1{p}} \left(\frac{p-1}{p}\right)^{\frac{p-1}{p}}  \left(\frac{1}{\alpha^{\theta - \frac{1}{2p}}} \Gamma\big(\theta - \frac{1}{2p}\big) \right)^2 \times \frac{p-1}{2\sqrt{\pi}} \left(\frac{1}{2\alpha}\right)^{\frac{1}{2}-2\theta} \Gamma\big(\frac{1}{2} - 2\theta\big) \right\} \nonumber\\
  ={} & \frac{K_p}{\alpha^{\frac{p-2}{2p}}}
\end{align*}
and
\begin{align*}
  K_p:=\min_{\frac{1}{2p} < \theta< \frac{1}{4}} \left\{\left|\frac{\sin\pi\theta}{\pi}\right|^2 (2\pi)^{-\frac1{p}} \left(\frac{p-1}{p}\right)^{\frac{p-1}{p}}  2^{2\theta - \frac{1}{2}} \left( \Gamma\big(\theta - \frac{1}{2p}\big) \right)^2 \times \frac{p-1}{2\sqrt{\pi}} \Gamma\big(\frac{1}{2} - 2\theta\big) \right\}.
\end{align*}
This completes the proof of Theorem \ref{estimates 001}.
\end{proof}

\begin{remark}
In fact, combining the above proof with the dissipativity of the operator $\frac{1}{2}\partial_{xx}$ on $[0,1]$, one can show that for any $\alpha>-\frac{1}{2}\pi^2$ and $p>2$, there is a constant $C_{\alpha + \frac{1}{2}\pi^2, p}$ independent of $T$ such that
  \begin{align}\label{260706.2030}
     \sup_{t\in [0,T]} \mathbb{E} \sup_{x\in [0,1]}\left|\int_0^t\int_0^1 e^{-\alpha (t-s)}p_{t-s}(x,y)\sigma(s,y)\,W(ds,dy) \right|^2
    \leq  C_{\alpha + \frac{1}{2}\pi^2, p} \sup_{s\in [0,T]} \mathbb{E} \sup_{y\in[0,1]}| \sigma(s,y)|^2 ,
  \end{align}
Indeed, the largest eigenvalue of the operator $\frac{1}{2}\partial_{xx}$ on $[0,1]$ with the Dirichlet boundary condition is $-\frac{1}{2}\pi^2$. By arguments similar to those in the above proof, but using finer heat-kernel estimates in \eqref{104.1} and \eqref{260625.1057}, one can prove that \eqref{260706.2030} holds for any $\alpha>-\frac{1}{2}\pi^2$ and $p>2$. However, the resulting constant $C_{\alpha + \frac{1}{2}\pi^2, p}$ is rather complicated. Here, instead of pursuing the optimal range of $\alpha$, we present this new method for deriving a global-in-time and spatially uniform estimate for stochastic convolutions with an explicit constant.

\end{remark}

\section{Lipschitz continuity with respect to the controls}\label{260705.0926}
In this section, 
we show that the difference between the solution to equation \eqref{3.1} and the solution to the corresponding stochastic controlled equation can be bounded by the $L^2(\Omega\times[0,T]\times[0,1])$-norm of the control multiplied by a constant independent of time. 
To this end, we establish Lipschitz continuity of the solutions of stochastic controlled equations with respect to the controls with a time-independent Lipschitz constant. 
We stress that the It\^{o} formula/energy equality is not available for
such stochastic reaction-diffusion equations driven by space-time white noise. It is tricky to make use of the dissipativity condition (H2).

Let $h^i: \Omega\times \mathbb{R}_+\times [0,1]\rightarrow \mathbb{R}$, $i=1,2$, be two adapted random fields.
Consider the following controlled equation:
\begin{align}\label{260702.2110}
\left\{
\begin{aligned}
 du^i(t,x)&=\frac{1}{2}\partial_{xx} u^i(t,x) dt +b(u^i(t,x)) dt + h^i(t,x) dt \\
 &\quad + \sigma(u^i(t,x))W(dt,dx), \quad t>0,\ x\in [0,1], \\
 u^i(t,0)&=u^i(t,1)=0  , \quad\quad t> 0,\\
u^i(0,x)&=u_0(x), \quad x\in [0,1],
\end{aligned}
\right.
\end{align}


\begin{theorem}\label{260206.2029}
Let $u^i(t,\cdot)$ denote the solution to equation \eqref{260702.2110}, $i=1,2$.
\begin{itemize}
  \item [(i)] Assume that (H2) holds. If $\alpha>-\frac{1}{2}\pi^2$,
then for any $ T\geq 0$,
\begin{align}\label{260207.1931}
    \sup_{t\in [0,T]}\mathbb{E} \Vert u^2(t) - u^1(t)\Vert_{L^1} \leq 2\sqrt{C_1(\alpha)} \left( \mathbb{E} \int_{0}^{T}\int_{0}^{1} |h^2(s,y) - h^1(s,y)|^2 dsdy \right)^{\frac{1}{2}},
\end{align}
where the constant $C_1(\alpha)$ is independent of $T$ and given by
\begin{align}\label{260708.0925}
   C_1(\alpha) :=8\sum_{k=0}^{\infty} \frac{1}{(2k+1)^2 \pi^2 [(2k+1)^2\pi^2 + 2\alpha]} .
\end{align}
  \item [(ii)] Assume that (H1) and (H2) hold. Suppose that
  \begin{align}\label{260626.1731}
    \sup_{t\in [0,T]} \mathbb{E}\int_{0}^{1}|u^2(t,x) - u^1(t,x)|^2  dx <\infty  \quad\text{for any }\  T\geq 0.
  \end{align}
  Let
  \[
  F_2(\alpha):= 2\sum_{k=0}^{\infty} \frac{1}{(2k+1)^2\pi^2 + 2\alpha}.
  \]
Let $\alpha_2^*$ be the unique solution to $F_2(\alpha)L_{\sigma}^2 =1$ with the convention that $\alpha_2^* =-\frac{1}{2}\pi^2$ when $L_{\sigma}=0$. If $\alpha> \alpha_2^*$, then for any $T\geq 0$,
\begin{align}\label{260208.1116}
\sup_{t\in [0,T]}\left(\mathbb{E} \int_{0}^{1} |u^2(t,x) - u^1(t,x)|^2  dx\right)^{\frac{1}{2}} \leq 2 C_2(\alpha) \left(\mathbb{E}\int_{0}^{T} \int_{0}^{1}| h^2(s,y) - h^1(s,y) |^2 dsdy\right)^{\frac{1}{2}} ,
\end{align}
where the constant $C_2(\alpha)$ is independent of $T$ and given by
\begin{align*}
  C_2(\alpha) :=  \left(1-  \sqrt{F_2(\alpha) L_{\sigma}^2} \right)^{-1} \sqrt{F_1(\alpha)} \quad \text{and} \quad
   F_1(\alpha):= \sum_{n=1}^{\infty}\frac{1}{n^2\pi^2 + 2\alpha}, \quad \text{for}\  \alpha>\alpha_2^*.
\end{align*}

    \item [(iii)] Assume that (H1) and (H2) hold. Suppose that
    \begin{align}\label{260620.2115}
      \sup_{t\in [0,T]}\mathbb{E}\Big[\sup_{x\in [0,1]}|u^2(t,x) - u^1(t,x)|^2\Big] <\infty \quad\text{for any }\  T\geq 0.
    \end{align}
  If
  \begin{align}\label{260630.1622}
    \alpha> \alpha_{\infty}^{*}:= \inf_{p>2} \left(K_p L_{\sigma}^2\right)^{\frac{2p}{p-2}} ,
\end{align}
where the constant $K_p$ is defined by \eqref{260630.1705},
then for any $T\geq 0$,
\begin{align}\label{260620.2004}
\sup_{t\in [0,T]}\left(\mathbb{E} \Big[\sup_{x\in [0,1]} |u^2(t,x) - u^1(t,x)|^2 \Big]  \right)^{\frac{1}{2}} \leq 2 C_{\infty}(\alpha) \left(\mathbb{E}\int_{0}^{T} \int_{0}^{1} | h^2(s,y) - h^1(s,y) |^2 dsdy\right)^{\frac{1}{2}},
\end{align}
where the constant $C_{\infty}(\alpha)$ is independent of $T$ and given by
\[
C_{\infty}(\alpha) := \left(1 - \sqrt{F_3(\alpha) L_{\sigma}^2} \right)^{-1} \sqrt{F_2(\alpha)} \quad\text{ and } \quad F_3(\alpha):= \inf_{p>2} \frac{K_p }{\alpha^{\frac{p-2}{2p}}}  ,  \quad \text{ for } \alpha>\alpha_{\infty}^* .
\]
\end{itemize}
\end{theorem}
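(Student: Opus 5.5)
The plan is to study the difference $v:=u^2-u^1$ and to exploit (H2) through comparison principles rather than an It\^o/energy identity. First linearize the drift: set
\[
c(t,x):=\frac{b(u^2(t,x))-b(u^1(t,x))}{v(t,x)}\ \ (v\neq 0),\qquad c:=-\alpha\ \ (v=0),
\]
so that $c\le -\alpha$ by (H2). Then $v$ solves $dv=\frac12\partial_{xx}v\,dt-\alpha v\,dt+(c+\alpha)v\,dt+g\,dt+\Sigma\,W(dt,dx)$ with $v(0)=0$, where $g:=h^2-h^1$ and $\Sigma:=\sigma(u^2)-\sigma(u^1)$. The extra potential $(c+\alpha)\le 0$ only pulls $v$ toward zero. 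The whole argument aims at removing it via comparison, so that one is left with the linear equation with decay rate $e^{-\alpha t}$ and the heat kernel, for which every constant is explicit and $T$-independent.

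For (i), where no Lipschitz condition on $\sigma$ is available, I would use ordered solutions. Let $u^3$ solve \eqref{260702.2110} with $h^3:=h^1\vee h^2$. By the comparison principle for SPDEs of this type we have $u^3\ge u^1$ and $u^3\ge u^2$, hence
\[
|u^2-u^1|\le (u^3-u^1)+(u^3-u^2).
\]
For an ordered pair, say $w:=u^3-u^1\ge 0$, the $L^1$ norm is linear in $w$. Take expectations in the mild/weak formulation tested against $P_s1$, i.e.\ work with $x\mapsto\int_0^1 p_{t-s}(x,y)\,dy$. The stochastic integral has mean zero, and $(b(u^3)-b(u^1))\le -\alpha w$ because $w\ge0$. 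A Gronwall/comparison step in the mild form should give
\[
\mathbb{E}\|w(t)\|_{L^1}\le \mathbb{E}\int_0^t\!\!\int_0^1 e^{-\alpha(t-s)}\Big(\int_0^1 p_{t-s}(x,y)\,dx\Big)|g(s,y)|\,dy\,ds .
\]
Then apply Cauchy--Schwarz in $(s,y)$ and expand $\int_0^1p_s(x,y)\,dx=\sum_{k}\frac{4}{(2k+1)\pi}e^{-(2k+1)^2\pi^2 s/2}\sin((2k+1)\pi y)$; only odd modes survive. Parseval then gives $\int_0^\infty\!\int_0^1 e^{-2\alpha s}(\int p_s\,dx)^2\,dy\,ds=C_1(\alpha)$, and the two ordered pairs produce the factor $2$ in \eqref{260207.1931}.

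For (ii) and (iii), I would split $v=Z+w$. Here $Z$ is the linear stochastic convolution $Z(t)=\int_0^t\!\int e^{-\alpha(t-s)}p_{t-s}\Sigma\,W(ds,dy)$, so that $w$ solves a random PDE without noise. The key pathwise claim, which I expect to be the main obstacle, is
\[
|v(t,x)|\le |Z(t,x)|+Y(t,x),\qquad Y(t):=\int_0^te^{-\alpha(t-s)}P_{t-s}|g(s)|\,ds .
\]
To prove it I would show $(v-|Z|-Y)^+=0$ by a maximum-principle argument on the random PDE for $w$. On the set $\{v>0\}$ the term $(c+\alpha)v$ is nonpositive and can be dropped, which should give $v\le Z+Y\le|Z|+Y$ there; the set $\{v<0\}$ is handled symmetrically. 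The difficulty is that $Z$ is only H\"older continuous, so $(c+\alpha)(w+Z)$ cannot be controlled by $w$ alone. I would first mollify $W$ or regularize $Z$ and then pass to the limit, in the spirit of the technique in \cite{SZZ26-1}.

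Granting this claim, (ii) follows from Minkowski's inequality. First, $\sup_t\|Y(t)\|_{L^2(\Omega\times[0,1])}\le\sqrt{F_1(\alpha)}\,\|g\|_{L^2}$, via the eigenfunction expansion of $p$ and $\sum_n\int_0^\infty e^{-(n^2\pi^2+2\alpha)s}ds$. Second, by the It\^o isometry, $\mathbb{E}\|Z(t)\|_{L^2}^2\le L_\sigma^2F_2(\alpha)\sup_s\mathbb{E}\|v(s)\|_{L^2}^2$. Since the supremum is finite by \eqref{260626.1731}, it can be absorbed exactly when $F_2(\alpha)L_\sigma^2<1$, which yields $C_2(\alpha)$. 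For (iii), bound $\sup_x Y$ by Cauchy--Schwarz together with $\int_0^\infty e^{-2\alpha s}\sup_x\int p_s(x,y)^2\,dy\,ds\le F_2(\alpha)$. Bound $Z$ by Theorem \ref{estimates 001}, using $|\Sigma|\le L_\sigma|v|$, which gives the factor $F_3(\alpha)L_\sigma^2$, taking the infimum over $p>2$. Absorbing this term using \eqref{260620.2115} gives $C_\infty(\alpha)$ whenever $\alpha>\alpha_\infty^*$, and the extra factor $2$ comes from the split into $v^+$ and $v^-$ in the comparison.
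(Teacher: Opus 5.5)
Your part (i) is correct and is essentially the paper's argument. You compare with the solution driven by $h^1\vee h^2$ and write the mild form with $e^{-\alpha t}P_t$ and the decreasing function $b_\alpha=b+\alpha\,\mathrm{id}$. For an ordered pair the drift difference is then pointwise nonpositive and can simply be dropped, so no Gronwall step is needed. Taking expectations removes the noise, and $C_1(\alpha)$ comes from the odd modes.

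The gap is in (ii)--(iii). The pathwise bound $|v|\le|Z|+Y$ for the \emph{unordered} difference $v=u^2-u^1$ is false in general, and your maximum-principle sketch cannot give it. On a component $D$ of $\{v>0\}$, dropping $(c+\alpha)v\le0$ only shows that $\phi:=v-Z-Y$ is a subsolution of $\partial_t\phi=(\frac12\partial_{xx}-\alpha)\phi$ in $D$. So $\phi$ is controlled only by its positive part on the parabolic boundary $\partial_pD$. Where $v=0$ on $\partial_pD$ one has $\phi=-Z-Y$, and nothing forces $-Z\le Y$ there. This is a structural problem, not a regularity one, so mollifying $W$ will not help.

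The space-independent analogue $dv=-Kv\,dt+g\,dt+Lv\,dW$ with $\alpha=0$ already exhibits the failure. There $Z=\int_0^tLv\,dW$, $Y=\int_0^t|g|\,ds$, and $v(t)=\int_0^tg\,ds+Z(t)-K\int_0^tv\,ds$. After a long negative excursion of $v$, the memory term $-K\int_0^tv\,ds>0$ can exceed $2\int_0^tg^-\,ds=Y(t)-\int_0^tg\,ds$ with positive probability. A positive kick of $g$, followed by a rapid rise of $v$ through the multiplicative noise that makes $Z$ nonnegative, then gives $v(t)>|Z(t)|+Y(t)$ on an event of positive probability.

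Note also that if your claim held, you would obtain \eqref{260208.1116} and \eqref{260620.2004} without the factor $2$. The ``$v^+/v^-$ split'' you invoke for that factor never enters your chain of inequalities.

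The repair is the reduction you already used in (i). Let $u^3$ be driven by $h^1\vee h^2$ and set $w_i:=u^3-u^i\ge0$. Then $b_\alpha(u^3)-b_\alpha(u^i)\le0$ \emph{everywhere}, so pathwise $0\le w_i\le Y_i+Z_i$. Here $Y_i$ is the $e^{-\alpha(t-s)}P_{t-s}$-convolution of $(h^1\vee h^2)-h^i\ge0$, and $Z_i$ is the stochastic convolution of $\sigma(u^3)-\sigma(u^i)$. Hence $|w_i|\le Y_i+|Z_i|$.

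Your Minkowski/It\^o-isometry absorption for (ii), and the bound from Theorem \ref{estimates 001} optimized over $p>2$ for (iii), then go through verbatim. You use $|\sigma(u^3)-\sigma(u^i)|\le L_\sigma w_i$ and $|(h^1\vee h^2)-h^i|\le|h^2-h^1|$, which yields $C_2(\alpha)$, resp.\ $C_\infty(\alpha)$, for each $w_i$. The factor $2$ comes from $|u^2-u^1|\le w_1+w_2$.

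The absorption step now needs the a priori finiteness for the pairs $(u^i,u^3)$ rather than $(u^1,u^2)$. Under (H3) this holds as in Remark \ref{260701.1105}.
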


\begin{remark}\label{260701.1105}
If we suppose in addition that (H3) holds, i.e., $\sigma$ is bounded, then conditions \eqref{260626.1731} and \eqref{260620.2115} are satisfied. This fact can be easily seen from the proof below; see \eqref{260707.1312} and \eqref{260623.1342}. 

\end{remark}

\begin{remark}
  Using the heat kernel estimate \eqref{260207.2020} in the proof below, we have
\[
F_1(\alpha) < \frac{1}{2\sqrt{2\alpha}}, \quad F_2(\alpha) < \frac{1}{2\sqrt{2\alpha}} \quad \text{when } \alpha>0, \text{ and } \  \alpha_2^*< \frac{L_{\sigma}^4}{8}.
\]
\end{remark}

\begin{proof}
In the proof, we adapt the newly found technique in our paper \cite{SZZ26-1} to fully exploit comparison principles.
The proof is divided into two steps.

\textbf{Step 1.} We prove Theorem \ref{260206.2029} under the restriction that $h^2(s,y)\geq h^1(s,y)$ for all $(s,y)\in \mathbb{R}_+\times[0,1]$, almost surely.

In this case, the comparison theorem yields $u^2(t,x)\geq u^1(t,x)$ for every $(t,x)\in\mathbb{R}_+\times [0,1]$, almost surely.
The semigroup generated by $\frac{1}{2}\partial_{xx} - \alpha I$ is $e^{-\alpha t}P_t$, where $P_t$ is defined in \eqref{260603.1931}.
Hence, the solution to \eqref{260702.2110} admits the following mild form:
\begin{align*}
  u^i(t,x) = & e^{-\alpha t} P_t u_0(x) + \int_{0}^{t} \int_{0}^{1} e^{-\alpha (t-s)} p_{t-s}(x,y)b_{\alpha}(u^i(s,y)) dsdy \nonumber\\
  & + \int_{0}^{t} \int_{0}^{1} e^{-\alpha (t-s)} p_{t-s}(x,y)h^i(s,y) dsdy \nonumber\\
    & + \int_{0}^{t} \int_{0}^{1} e^{-\alpha (t-s)} p_{t-s}(x,y)\sigma (u^i(s,y)) W(ds,dy) ,
\end{align*}
where $b_{\alpha}(u)=b(u)+\alpha u$. The crucial observation is that (H2) holds if and only if $b_{\alpha}(u)$ is decreasing.
By subtraction, we have
\begin{align*}
  u^2(t,x) -u^1(t,x) = &  \int_{0}^{t} \int_{0}^{1} e^{-\alpha (t-s)} p_{t-s}(x,y)[b_{\alpha}(u^2(s,y)) - b_{\alpha}(u^1(s,y))] dsdy \nonumber\\
  & + \int_{0}^{t} \int_{0}^{1} e^{-\alpha (t-s)} p_{t-s}(x,y)[h^2(s,y) - h^1(s,y)]dsdy \nonumber\\
    & + \int_{0}^{t} \int_{0}^{1} e^{-\alpha (t-s)} p_{t-s}(x,y)[\sigma (u^2(s,y)) - \sigma (u^1(s,y))]W(ds,dy) .
\end{align*}
Since $u^2(s,y)\geq u^1(s,y)$ and $b_{\alpha}$ is decreasing,
we deduce that
\begin{align}\label{260207.2002}
  u^2(t,x) - u^1(t,x) \leq & \int_{0}^{t} \int_{0}^{1} e^{-\alpha (t-s)} p_{t-s}(x,y)[h^2(s,y) - h^1(s,y)]dsdy \nonumber\\
      & + \int_{0}^{t} \int_{0}^{1} e^{-\alpha (t-s)} p_{t-s}(x,y)[\sigma (u^2(s,y)) - \sigma (u^1(s,y))]W(ds,dy) .
\end{align}
Taking expectations in the above inequality, we obtain
\begin{align*}
  & \mathbb{E}|u^2(t,x) - u^1(t,x)| = \mathbb{E}[u^2(t,x) - u^1(t,x)] \\
  \leq &  \mathbb{E}\int_{0}^{t} \int_{0}^{1} e^{-\alpha (t-s)} p_{t-s}(x,y) |h^2(s,y) - h^1(s,y)| dsdy .
\end{align*}
Integrating with respect to $x$ over $[0,1]$ and using the Cauchy--Schwarz inequality yields
\begin{align}\label{260707.0932}
   \mathbb{E}\Vert u^2(t) - u^1(t) \Vert_{L^1} \leq{} & \mathbb{E}\int_{0}^{t} \int_{0}^{1}  e^{-\alpha (t-s)} \Big(\int_0^1 p_{t-s}(x,y)dx \Big) |h^2(s,y) - h^1(s,y)| dsdy \nonumber\\
   \leq{}& \left[\int_0^t \int_0^1 e^{-2\alpha (t-s)} \Big(\int_0^1 p_{t-s}(x,y)dx \Big)^2 dyds\right]^{\frac{1}{2}} \nonumber\\
   {}& \times\left( \mathbb{E} \int_{0}^{t}\int_{0}^{1} |h^2(s,y) - h^1(s,y)|^2 dsdy \right)^{\frac{1}{2}},\quad t\in [0,T].
\end{align}
It is well-known that (see (1.4.8) in \cite{DS26})
\begin{align}\label{260707.1554}
  p_{r}(x,y) := \sum_{n=1}^{\infty} e^{-\frac{n^2\pi^2}{2}r} e_n(x) e_n(y), \qquad x,y\in [0,1], \ r>0,
\end{align}
where $e_n(x) = \sqrt{2}\sin(n\pi x)$, which constitutes an orthonormal basis of $L^2([0,1])$. A direct computation gives
\begin{align}
  \int_0^1 \Big| \int_0^1 p_{t-s}(x,y)dx \Big|^2 dy = \sum_{k=0}^{\infty} \frac{8}{(2k+1)^2 \pi^2} e^{-(2k+1)^2 \pi^2 (t-s)}.
\end{align}
Hence for any $\alpha>-\frac{1}{2}\pi^2$ and $t\geq 0$,
\begin{align}\label{260707.0933}
  \int_0^t \int_0^1 e^{-2\alpha (t-s)} \Big(\int_0^1 p_{t-s}(x,y)dx \Big)^2 dyds \leq \sum_{k=0}^{\infty} \frac{8}{(2k+1)^2 \pi^2 } \frac{1}{(2k+1)^2\pi^2 + 2\alpha} := C_1(\alpha) .
\end{align}
Combining \eqref{260707.0932} with \eqref{260707.0933} gives
\begin{align*}
   \mathbb{E}\Vert u^2(t) - u^1(t) \Vert_{L^1} \leq & \sqrt{C_1(\alpha)} \left( \mathbb{E} \int_{0}^{T}\int_{0}^{1} |h^2(s,y) - h^1(s,y)|^2 dsdy \right)^{\frac{1}{2}},\quad t\in [0,T].
\end{align*}
%
%
%
This proves \eqref{260207.1931}.

\vskip 0.3cm

Next, we prove \eqref{260208.1116}. It suffices to prove \eqref{260208.1116} when
\begin{align}\label{260701.1015}
  \mathbb{E}\int_{0}^{T} \int_{0}^{1}| h^2(s,y) - h^1(s,y) |^2 dsdy <\infty.
\end{align}
Taking the $L^2(\Omega)$-norm on both sides of \eqref{260207.2002}, we obtain
\begin{align*}
  & \Vert u^2(t,x) -u^1(t,x) \Vert_{L^2(\Omega)} \nonumber\\
   \leq & \left\Vert\int_{0}^{t} \int_{0}^{1} e^{-\alpha (t-s)} p_{t-s}(x,y)[h^2(s,y) - h^1(s,y)]dsdy \right\Vert_{L^2(\Omega)} \nonumber\\
   & + \left\Vert \int_{0}^{t}\int_{0}^{1} e^{-\alpha (t-s)} p_{t-s}(x,y)[\sigma (u^2(s,y)) - \sigma (u^1(s,y))]W(ds,dy) \right\Vert_{L^2(\Omega)} \\
  \leq & \int_{0}^{t} \int_{0}^{1} e^{-\alpha (t-s)} p_{t-s}(x,y) \Vert h^2(s,y) - h^1(s,y) \Vert_{L^2(\Omega)} dsdy  \nonumber\\
  & + \left\{ \mathbb{E}\int_{0}^{t}\int_{0}^{1} e^{-2\alpha (t-s)} p_{t-s}(x,y)^2 |\sigma( u^2(s,y)) -  \sigma( u^1(s,y))|^2 dsdy \right\}^{\frac{1}{2}} .
\end{align*}
We then take the $L^2$-norm with respect to the spatial variable $x$ on both sides of the above inequality to get
\begin{align}\label{260707.1312}
  & \left(\mathbb{E}\Vert u^2(t) - u^1(t)\Vert_{L^2}^2\right)^{\frac{1}{2}} \nonumber\\
  \leq & \int_{0}^{t} \int_{0}^{1} e^{-\alpha (t-s)} \Vert p_{t-s}(\cdot,y)\Vert_{L^2} \cdot \Vert h^2(s,y) - h^1(s,y) \Vert_{L^2(\Omega)} dsdy  \nonumber\\
 & + \left\{\mathbb{E}\int_{0}^{t}\int_{0}^{1} \int_{0}^{1} e^{-2\alpha (t-s)} p_{t-s}(x,y)^2  |\sigma( u^2(s,y)) -  \sigma( u^1(s,y))|^2  dsdydx \right\}^{\frac{1}{2}}  \nonumber\\
  \leq & \left(\int_{0}^{t} \int_{0}^{1} e^{-2\alpha (t-s)} \Vert p_{t-s}(\cdot,y)\Vert_{L^2}^2 dsdy\right)^{\frac{1}{2}} \left( \int_{0}^{t} \int_{0}^{1} \Vert h^2(s,y) - h^1(s,y) \Vert_{L^2(\Omega)}^2 dsdy \right)^{\frac{1}{2}} \nonumber\\
 & + \left\{ \int_{0}^{t}  e^{-2\alpha (t-s)} \bigg(\sup_{y\in [0,1]}\int_{0}^{1} p_{t-s}(x,y)^2 dx \bigg) \cdot  \int_{0}^{1} \mathbb{E} |\sigma( u^2(s,y)) -  \sigma( u^1(s,y))|^2 dy ds \right\}^{\frac{1}{2}}.
\end{align}
Here we have used the Cauchy--Schwarz inequality and the Fubini theorem.
%
%
%
Set
\[
\mathcal{N}_T(u):= \sup_{0\leq t\leq T} \left[ \left(\mathbb{E} \int_{0}^{1} |u(t,x)|^2 dx\right)^{\frac{1}{2}} \right].
\]
Using condition (H1) and taking the supremum over $t\in [0,T]$ on both sides of \eqref{260707.1312}, we obtain
\begin{align}\label{260208.1055}
  & \mathcal{N}_T(u^2 -u^1) \nonumber\\
  \leq & \sup_{t\in [0,T]}\left(\int_{0}^{t} \int_{0}^{1} \int_{0}^{1} e^{-2\alpha (t-s)} | p_{t-s}(x,y)|^2 dsdxdy\right)^{\frac{1}{2}} \left( \mathbb{E} \int_{0}^{t} \int_{0}^{1} | h^2(s,y) - h^1(s,y) |^2 dsdy \right)^{\frac{1}{2}} \nonumber\\
 & + \sup_{t\in [0,T]}\left\{ \int_{0}^{t} e^{-2\alpha (t-s)} \bigg(\sup_{y\in [0,1]}\int_{0}^{1} p_{t-s}(x,y)^2 dx \bigg) ds \cdot  L_{\sigma}^2 \mathcal{N}_T(u^2 -u^1)^2 \right\}^{\frac{1}{2}}.
\end{align}
By \eqref{260707.1554}, a direct computation gives
\[
\int_{0}^{1} \int_{0}^{1} p_{t-s}(x,y)^2 dxdy = \sum_{n=1}^{\infty} e^{-n^2 \pi^2 (t-s)}.
\]
Hence for any $t>0$,
\begin{align}\label{260707.1450}
  \int_{0}^{t} \int_{0}^{1} \int_{0}^{1} e^{-2\alpha (t-s)} | p_{t-s}(x,y)|^2 dsdxdy \leq \sum_{n=1}^{\infty}\frac{1}{n^2\pi^2 + 2\alpha} =: F_1(\alpha).
\end{align}
By the semigroup property of the heat kernel $p_t(x,y)$ and \eqref{260707.1554},
\begin{align}\label{260707.2046}
  \sup_{y\in [0,1]}\int_{0}^{1} p_{t-s}(x,y)^2 dx  ={}&  \sup_{y\in [0,1]} p_{2(t-s)}(y,y) = p_{2(t-s)}\Big(\frac{1}{2}, \frac{1}{2}\Big)
= 2\sum_{k=0}^{\infty} e^{-(2k+1)^2\pi^2 (t-s)} .
\end{align}
The second equality holds because $p_t(y,y)$, viewed as a function of $y$, is log-concave and symmetric about $y=1/2$. Indeed, applying Corollary 3.5 of \cite{BL76} to the Trotter product representation of $p_t(x,y)$ on $(0,1)$ (see (6.4) of \cite{BL76}) shows that $p_t(x,y)$ is jointly log-concave in $(x,y)\in (0,1)\times(0,1)$ for every $t>0$. Hence the diagonal function $p_t(y,y)$ is log-concave on $(0,1)$. Moreover, \eqref{260707.1554} gives $p_t(y,y)=p_t(1-y,1-y)$. Therefore $p_t(y,y)$ attains its maximum at $y=1/2$ for every $t>0$.
Hence, it follows from \eqref{260707.2046} that
\begin{align}\label{260707.1451}
   \int_{0}^{t}  e^{-2\alpha (t-s)} \bigg(\sup_{y\in [0,1]}\int_{0}^{1} p_{t-s}(x,y)^2 dx \bigg) ds \leq 2\sum_{k=0}^{\infty} \frac{1}{(2k+1)^2\pi^2 + 2\alpha}  =: F_2(\alpha) .
\end{align}
Combining \eqref{260208.1055}, \eqref{260707.1450} and \eqref{260707.1451} gives
\begin{align}\label{260707.1507}
    & \mathcal{N}_T(u^2 -u^1) \nonumber\\
  \leq & \sqrt{F_1(\alpha)} \left( \mathbb{E} \int_{0}^{T} \int_{0}^{1} | h^2(s,y) - h^1(s,y) |^2 dsdy \right)^{\frac{1}{2}} + \sqrt{F_2(\alpha) L_{\sigma}^2} \cdot \mathcal{N}_T(u^2 -u^1).
\end{align}
Note that $F_2(\alpha)$ is strictly decreasing on $(-\frac{1}{2}\pi^2,\infty)$. So there exists a unique $\alpha_{2}^* \in (-\frac{1}{2}\pi^2,\infty)$ such that $F_2(\alpha_{2}^*) L_{\sigma}^2 = 1$ when $L_{\sigma}>0$. Therefore,
\[
F_2(\alpha)L_{\sigma}^2 < 1, \qquad \forall \ \alpha>\alpha_2^*.
\]
\eqref{260626.1731} means that $\mathcal{N}_T(u^2 -u^1)<\infty$.
Hence \eqref{260707.1507} gives
\begin{align*}
  \mathcal{N}_T(u^2 -u^1) \leq  \left(1-  \sqrt{F_2(\alpha) L_{\sigma}^2} \right)^{-1} \sqrt{F_1(\alpha)} \left(\mathbb{E}\int_{0}^{T} \int_{0}^{1}| h^2(s,y) - h^1(s,y) |^2 dsdy\right)^{\frac{1}{2}} ,
\end{align*}
for any $\alpha> \alpha_2^*$. This proves \eqref{260208.1116}.

\vskip 0.3cm

Finally, we prove \eqref{260620.2004}. It suffices to prove \eqref{260620.2004} when \eqref{260701.1015} holds.
Set
\[
\hat{\mathcal{N}}_{T}(u):= \sup_{0\leq t\leq T}  \left(\mathbb{E} \Big[ \sup_{x\in [0,1]} |u(t,x)|^2 \Big]\right)^{\frac{1}{2}} .
\]
Taking $\sup_{x\in [0,1]}$ on both sides of \eqref{260207.2002} gives
\begin{align*}
  & \sup_{x\in [0,1]}| u^2(t,x) - u^1(t,x)| =  \sup_{x\in [0,1]}[ u^2(t,x) - u^1(t,x)]  \nonumber\\
  \leq & \sup_{x\in [0,1]}\left|\int_{0}^{t} \int_{0}^{1} e^{-\alpha (t-s)} p_{t-s}(x,y)[h^2(s,y) - h^1(s,y)]dsdy \right| \nonumber\\
  & + \sup_{x\in [0,1]} \left|\int_{0}^{t} \int_{0}^{1} e^{-\alpha (t-s)} p_{t-s}(x,y)[\sigma (u^2(s,y)) - \sigma (u^1(s,y))]W(ds,dy)\right|.
\end{align*}
Therefore,
\begin{align}\label{260623.1341}
  & \hat{\mathcal{N}}_{T}(u^2 - u^1) \nonumber\\
  \leq & \sup_{t\in [0,T]} \left\Vert \sup_{x\in [0,1]}\left|\int_{0}^{t} \int_{0}^{1} e^{-\alpha (t-s)} p_{t-s}(x,y)[h^2(s,y) - h^1(s,y)]dsdy \right| \right\Vert_{L^2(\Omega)} \nonumber\\
  & + \sup_{t\in [0,T]}\left\Vert \sup_{x\in [0,1]} \left|\int_{0}^{t} \int_{0}^{1} e^{-\alpha (t-s)} p_{t-s}(x,y)\big[\sigma (u^2(s,y)) - \sigma (u^1(s,y))\big]W(ds,dy)\right| \right\Vert_{L^2(\Omega)} \nonumber\\
  := & I + II.
\end{align}
By H\"{o}lder's inequality and \eqref{260707.1451}, the term $I$ can be bounded by
\begin{align}\label{260630.1637}
  I\leq &\sup_{t\in [0,T]} \left\Vert \sup_{x\in [0,1]}\left(\int_{0}^{t} \int_{0}^{1} e^{-2\alpha (t-s)} p_{t-s}(x,y)^2 dsdy\right)^{\frac{1}{2}} \left(\int_{0}^{t} \int_{0}^{1} |h^2(s,y) - h^1(s,y)|^2 dsdy \right)^{\frac{1}{2}} \right\Vert_{L^2(\Omega)} \nonumber\\
  \leq & \sup_{t\in [0,T]} \left[\int_{0}^{t}  e^{-2\alpha (t-s)} \bigg(\sup_{x\in [0,1]} \int_{0}^{1} p_{t-s}(x,y)^2 dy \bigg) ds\right]^{\frac{1}{2}} \left\Vert  \left(\int_{0}^{t} \int_{0}^{1} |h^2(s,y) - h^1(s,y)|^2 dsdy \right)^{\frac{1}{2}} \right\Vert_{L^2(\Omega)} \nonumber\\
  \leq & \sqrt{F_2(\alpha)} \left(\mathbb{E}\int_{0}^{T} \int_{0}^{1} | h^2(s,y) - h^1(s,y) |^2 dsdy\right)^{\frac{1}{2}} .
\end{align}
We estimate the term $II$ as follows. By \eqref{260617.2015}, for $\alpha>0$,
  \begin{align}\label{260623.1342}
    II \leq & \left\{ \frac{K_p}{\alpha^{\frac{p-2}{2p}}} \sup_{s\in [0,T]} \mathbb{E}\Big[\sup_{y\in[0,1]}|\sigma (u^2(s,y)) - \sigma (u^1(s,y)) |^2\Big] \right\}^{\frac{1}{2}} \nonumber\\
    \leq &  \left\{ \frac{K_p}{\alpha^{\frac{p-2}{2p}}} L_{\sigma}^2 \sup_{s\in [0,T]}  \mathbb{E}\Big[ \sup_{y\in[0,1]} | u^2(s,y) - u^1(s,y) |^2\Big]\right\}^{\frac{1}{2}} \nonumber\\
    \leq & \left(  \frac{K_p L_{\sigma}^2}{\alpha^{\frac{p-2}{2p}}}   \right)^{\frac{1}{2}} \hat{\mathcal{N}}_{T}(u^2 - u^1)
  \end{align}
holds for any $p>2$.
Set
\begin{align*}
  \alpha_p := \left(K_p L_{\sigma}^2\right)^{\frac{2p}{p-2}} \quad  \text{ and } \quad
    \alpha_{\infty}^{*}:= \inf_{p>2} \alpha_p .
\end{align*}
Taking infimum over $p>2$ in \eqref{260623.1342} and setting
\[
F_3(\alpha):= \inf_{p>2} \frac{K_p }{\alpha^{\frac{p-2}{2p}}}  ,
\]
we have $F_3(\alpha) L_{\sigma}^2 <1$ for every $\alpha>\alpha_{\infty}^*$ and
\begin{align}\label{260630.1636}
 II \leq \sqrt{F_3(\alpha) L_{\sigma}^2} \hat{\mathcal{N}}_{T}(u^2 - u^1).
\end{align}
Combining \eqref{260623.1341}, \eqref{260630.1637} and \eqref{260630.1636} yields
\begin{align*}
  \hat{\mathcal{N}}_{T}(u^2 - u^1) \leq & \sqrt{F_2(\alpha)} \left(\mathbb{E}\int_{0}^{T} \int_{0}^{1} | h^2(s,y) - h^1(s,y) |^2 dsdy\right)^{\frac{1}{2}} + \sqrt{F_3(\alpha) L_{\sigma}^2} \hat{\mathcal{N}}_{T}(u^2 - u^1) .
\end{align*}
%
\eqref{260620.2115} means that $\hat{\mathcal{N}}_{T}(u^2 - u^1) < \infty$ for any $T\geq 0$.
Hence
\begin{align*}
  \hat{\mathcal{N}}_{T}(u^2 - u^1) \leq \left(1 - \sqrt{F_3(\alpha) L_{\sigma}^2} \right)^{-1} \sqrt{F_2(\alpha)} \left(\mathbb{E}\int_{0}^{T} \int_{0}^{1} | h^2(s,y) - h^1(s,y) |^2 dsdy\right)^{\frac{1}{2}},
\end{align*}
which proves \eqref{260620.2004}.

\vskip 0.5cm

\textbf{Step 2.} We remove the restriction that  $h^2(s,y)\geq h^1(s,y)$ for all $(s,y)\in \mathbb{R}_{+}\times [0,1]$.
\vskip 0.4cm
Let $u^{(1,2)}(t,x)$ denote the solution to equation \eqref{260702.2110} with $h^i$ replaced by $h^1\vee h^2$. Note that the constants appearing in \eqref{260207.1931}, \eqref{260208.1116} and \eqref{260620.2004} are independent of $h^1$ and $h^2$. Hence, for $i=1,2$,
\begin{align*}
  \sup_{t\in [0,T]}\left(\mathbb{E} \int_{0}^{1} |u^i(t,x) - u^{(1,2)}(t,x)|^2  dx\right)^{\frac{1}{2}} \leq C_2(\alpha) \left(\mathbb{E}\int_{0}^{T} \int_{0}^{1}| h^i(s,y) - (h^1\vee h^2)(s,y) |^2 dsdy\right)^{\frac{1}{2}}.
\end{align*}
Note that for $i=1,2$,
\begin{align*}
  |h^i(s,y) -(h^1\vee h^2)(s,y)| \leq |h^2(s,y) - h^1(s,y)|.
\end{align*}
Hence, by the triangle inequality we obtain
\begin{align*}
    \sup_{t\in [0,T]}\left(\mathbb{E} \int_{0}^{1} |u^2(t,x) - u^{1}(t,x)|^2  dx\right)^{\frac{1}{2}} \leq 2 C_2(\alpha) \left(\mathbb{E}\int_{0}^{T} \int_{0}^{1}| h^2(s,y) - h^1(s,y) |^2 dsdy\right)^{\frac{1}{2}}.
\end{align*}
This completes the proof of \eqref{260208.1116}. The proofs of  \eqref{260207.1931} and \eqref{260620.2004} in the general case are analogous, and we omit the details.
\end{proof}

\section{Proof of the main result}\label{260705.0945}
\setcounter{equation}{0}
This section is devoted to the proof of Theorem \ref{260701.1010}.
Let $d$ be one of the metrics $d_1$, $d_2$ and $d_{\infty}$ on the space $C_0([0,1])$.
For any $T>0$,
let $\mu_T$ be the law of the random field solution $u(T, \cdot)$ of equation \eqref{3.1}, viewed as a probability measure on $(C_0([0,1]), d)$. We first recall a lemma from \cite{KS19,DGW04} describing the probability measures $\nu$ that are absolutely continuous with respect to $\mu_T$.
\vskip 0.4cm
Let $\nu\ll \mu_T$ on $(C_0([0,1]),d)$.
Define a new probability measure $\mathbb{Q}_T$ on the filtered probability space $(\Omega, {\cal F}, \{{\cal F}_{t}\}_{0\leq t\leq T}, \mathbb{P})$ by
\begin{align}\label{add 0303.1}
\mathrm{d}\mathbb{Q}_T:=\frac{\mathrm{d}\nu}{\mathrm{d}\mu_T}(u(T)) \,\mathrm{d}\mathbb{P} .
\end{align}
Denote the Radon-Nikodym derivative restricted on ${\cal F}_t$ by
\[
M_t:=\left. \frac{\mathrm{d}\mathbb{Q}_T}{\mathrm{d}\mathbb{P}}\right|_{{\cal F}_t}, \quad t\in [0, T].
\]
Then $\{M_t\}_{t\in [0,T]}$ is a $\mathbb{P}$-martingale. The following result was proved in \cite{KS19}.
\begin{lemma}\label{260719.1108}
There exists an adapted random field $h=\{h(s,x), (s,x)\in [0, T]\times [0,1]\}$ dependent on $T$, such that $\mathbb{Q}_T$-a.s.,
\begin{align*}
\int_0^T\int_0^1 h^2(s,x)\,\mathrm{d}s\mathrm{d}x<\infty ,
\end{align*}
and $\widetilde{W}: [0, T]\times [0, 1]\rightarrow \mathbb{R}$ defined by
\begin{align}\label{4.2}
\widetilde{W}(t,x):=W(t,x)-\int_0^t\int_0^x h(s,y)\,\mathrm{d}s\mathrm{d}y,
\end{align}
is a Brownian sheet under the measure $\mathbb{Q}_T$. Moreover,
\begin{align}\label{4.3}
M_t=\exp\left(\int_0^t\int_0^1h(s,x)\,W(\mathrm{d}s,\mathrm{d}x)-\frac{1}{2}\int_0^t\int_0^1 h^2(s,x)\,\mathrm{d}s\mathrm{d}x\right ), \quad \mathbb{Q}_T \text{-a.s.},
\end{align}
and
\begin{align}\label{4.4}
H(\nu|\mu_T)=\frac{1}{2}\mathbb{E}^{\mathbb{Q}_T}\left[\int_0^T\int_0^1 h^2(s,x)\,\mathrm{d}s\mathrm{d}x\right] ,
\end{align}
where $\mathbb{E}^{\mathbb{Q}_T}$ denotes the expectation under the measure $\mathbb{Q}_T$.
\end{lemma}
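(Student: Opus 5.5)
The plan is to follow the Girsanov/martingale-representation argument of \cite{DGW04,KS19}. The key steps are: represent the density martingale $M_t$ as a stochastic exponential, read off the drift of the Brownian sheet under $\mathbb{Q}_T$, and compute the entropy.

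\textbf{Density and positivity.} First I note that $\mathbb{E}M_T=\int \frac{d\nu}{d\mu_T}\,d\mu_T=1$. I may assume $H(\nu|\mu_T)<\infty$, otherwise there is nothing to prove. Since $\{\mathcal F_t\}$ is the augmented filtration generated by the white noise, the martingale representation theorem for the cylindrical Brownian motion $\{W(r)\}$ on $L^2([0,1])$ applies. It gives a predictable random field $\phi$ with $\int_0^T\int_0^1\phi^2\,ds\,dx<\infty$ a.s. and
\[
M_t=1+\int_0^t\int_0^1\phi(s,x)\,W(ds,dx).
\]
$M$ is a nonnegative continuous martingale and may hit zero. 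Let $\tau=\inf\{t:M_t=0\}$. Then $M_t=0$ for $t\ge\tau$, and $\mathbb{Q}_T(\tau\le T)=\mathbb{E}[M_T;\tau\le T]=0$.

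\textbf{The field $h$ and the Brownian sheet.} Define $h:=\phi/M$ on $\{t<\tau\}$ and $h:=0$ otherwise. Since $M$ is continuous and strictly positive on $[0,T]$ $\mathbb{Q}_T$-a.s., the quantity $\inf_{t\le T}M_t$ is positive there, so $\int_0^T\int_0^1h^2<\infty$ $\mathbb{Q}_T$-a.s. Applying It\^o's formula to $\log M_t$ up to the stopping times $\tau_n=\inf\{t:M_t\le 1/n\}$ yields \eqref{4.3}. Girsanov's theorem for $W$ applied to the change of measure $M$ (in the localized form of van Schuppen--Wong/Lenglart, valid under $\mathbb{Q}_T\ll\mathbb{P}$) shows that $W(t,\cdot)-\int_0^t h(s,\cdot)\,ds$ is a cylindrical Brownian motion under $\mathbb{Q}_T$. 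Testing against the indicators $1_{[0,x]}$ then gives that $\widetilde W$ in \eqref{4.2} is a Brownian sheet.

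\textbf{Entropy identity.} The law of $u(T)$ under $\mathbb{Q}_T$ is $\nu$, because $\mathbb{Q}_T(u(T)\in A)=\int_A \frac{d\nu}{d\mu_T}\,d\mu_T$. Hence
\[
H(\nu|\mu_T)=\mathbb{E}^{\mathbb{Q}_T}\log\frac{d\nu}{d\mu_T}(u(T))=\mathbb{E}^{\mathbb{Q}_T}\log M_T.
\]
Rewriting \eqref{4.3} in terms of $\widetilde W$ gives
\[
\log M_T=\int_0^T\int_0^1h\,\widetilde W(ds,dx)+\tfrac12\int_0^T\int_0^1h^2\,ds\,dx.
\]
I then localize with $\sigma_n=\inf\{t:\int_0^t\int_0^1h^2\ge n\}$, so that the $\widetilde W$-integral has zero $\mathbb{Q}_T$-mean. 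This gives
\[
\mathbb{E}^{\mathbb{Q}_T}\log M_{\sigma_n\wedge T}=\tfrac12\,\mathbb{E}^{\mathbb{Q}_T}\int_0^{\sigma_n\wedge T}\int_0^1h^2\,ds\,dx.
\]

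\textbf{Main obstacle: passing to the limit $n\to\infty$.} The right-hand side converges by monotone convergence. For the left-hand side, the process $M_{t}\log M_{t}$ is a $\mathbb{P}$-submartingale, because $x\log x$ is convex. Optional stopping therefore gives
\[
\mathbb{E}^{\mathbb{Q}_T}\log M_{\sigma_n\wedge T}=\mathbb{E}^{\mathbb{P}}\bigl[M_{\sigma_n\wedge T}\log M_{\sigma_n\wedge T}\bigr]\le \mathbb{E}^{\mathbb{P}}[M_T\log M_T]=H(\nu|\mu_T).
\]
Fatou's lemma, applied to $M\log M$ which is bounded below by $-e^{-1}$, gives the reverse inequality in the limit. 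Together these yield \eqref{4.4}. This uniform-integrability step is the one requiring care; the rest is standard.
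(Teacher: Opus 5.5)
Your proposal is correct and follows essentially the route the paper relies on: the paper gives no proof of its own but quotes \cite{KS19}, which adapts \cite{DGW04} to white noise using the same steps you use --- martingale representation of $M_t$, the choice $h=\phi/M$, Girsanov's theorem, and a localization/Fatou argument for the entropy identity. One imprecision: saying there is ``nothing to prove'' when $H(\nu|\mu_T)=\infty$ is not quite right, since \eqref{4.4} still asserts that the right-hand side is then infinite; however, your Fatou step already gives $H(\nu|\mu_T)\le\frac12\mathbb{E}^{\mathbb{Q}_T}\int_0^T\int_0^1h^2\,ds\,dx$ with no finiteness assumption, and the $\mathbb{P}$-a.s.\ convergence $M_{\sigma_n\wedge T}\to M_T$ it needs holds because $\int_0^{\tau}\int_0^1h^2=\infty$ on $\{\tau\le T\}$, so $\sigma_n\uparrow\tau$ there and that case is covered.
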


\vskip 0.5cm

\begin{proof}[\bf Proof of Theorem \ref{260701.1010}]
By condition (S), the law of $u(T)$ on $(C_0([0,1]), d)$ converges weakly to an invariant measure $\mu$ of equation \eqref{3.1}. Hence, according to Lemma 2.2 in \cite{DGW04}, to prove that $\mu$ satisfies the transportation-cost inequality $T_p(C)$, it suffices to prove that the law of $u(T)$ on $(C_0([0,1]), d)$ satisfies $T_p(C)$ with the same constant $C$ independent of $T$.

Let $\mu_T$ be the law of the random field solution $u(T, \cdot)$ of SPDE \eqref{3.1}, viewed as a probability measure on $(C_0([0,1]), d)$.
Take $\nu\ll \mu_T$ on $(C_0([0,1]),d)$.
Define the corresponding measure $\mathbb{Q}_T$ by \eqref{add 0303.1}.
Let $h(t,x)$ be the corresponding random field appearing in Lemma \ref{260719.1108}. Then, under the measure $\mathbb{Q}_T$, the solution $u(t,x)$ of equation \eqref{3.1} satisfies
 \begin{align}\label{111.3}
 u(t,x)= & e^{-\alpha t}P_t u_0(x) + \int_0^t\int_0^1 e^{-\alpha (t-s)}p_{t-s}(x,y)b_{\alpha}(u(s,y))\,\mathrm{d}s\mathrm{d}y \nonumber\\
  & +\int_0^t\int_0^1 e^{-\alpha (t-s)} p_{t-s}(x,y)\sigma(u(s,y))\,\widetilde{W}(\mathrm{d}s,\mathrm{d}y)  \nonumber\\
  & + \int_0^t\int_0^1 e^{-\alpha (t-s)} p_{t-s}(x,y)\sigma(u(s,y))h(s,y)\,\mathrm{d}s\mathrm{d}y , \quad t\in [0,T],
\end{align}
where $b_{\alpha}(u) = b(u) +\alpha u$.
Consider the solution of the following SPDE:
\begin{align}\label{111.2}
v(t,x)= & e^{-\alpha t} P_t u_0(x) + \int_0^t\int_0^1 e^{-\alpha (t-s)} p_{t-s}(x,y)b_{\alpha}(v(s,y))\,\mathrm{d}s\mathrm{d}y \nonumber\\
  & +\int_0^t\int_0^1 e^{-\alpha (t-s)} p_{t-s}(x,y)\sigma(v(s,y))\,\widetilde{W}(\mathrm{d}s,\mathrm{d}y), \quad t\in [0,T].
\end{align}
By Lemma \ref{260719.1108}, under the measure $\mathbb{Q}_T$, the joint law of $(v(T),u(T))$ is a coupling of $(\mu_T, \nu)$. Therefore, by the definition of the Wasserstein distance,
\[
W_p(\nu, \mu_T)^p\leq \mathbb{E}^{\mathbb{Q}_T}\left[ d(u(T), v(T))^p\right].
\]
In view of \eqref{4.4}, to prove that $\mu_T$ satisfies the transportation-cost inequality $T_p(C)$,
\begin{align}
   W_p(\nu, \mu_T)\leq \sqrt{2C H(\nu|\mu_T)} ,
\end{align}
it is sufficient to show that
\begin{align}\label{111.1}
\big(\mathbb{E}^{\mathbb{Q}_T}\left[ d(u(T), v(T))^p\right]\big)^{\frac{1}{p}}
\leq \sqrt{C} \left(\mathbb{E}^{\mathbb{Q}_T} \int_0^T\int_0^1 h^2(s,y)\,\mathrm{d}s\mathrm{d}y\right)^{\frac{1}{2}} ,
\end{align}
when the right-hand side of \eqref{111.1} is finite. For simplicity, in the sequel we write $\mathbb{E}$ for $\mathbb{E}^{\mathbb{Q}_T}$.

The remaining proof will be divided into three cases according to the choice of the metric.

\noindent Case 1. $p=1$ and
\[
d(u,v) = d_1(u,v) = \int_{0}^{1} |u(x) - v(x)| dx, \qquad u,v\in C_0([0,1]) .
\]

\noindent Case 2. $p=2$ and
\[
d(u,v) = d_2(u,v) = \left(\int_{0}^{1} |u(x) - v(x)|^2 dx\right)^{\frac{1}{2}}, \qquad u,v\in C_0([0,1]) .
\]

\noindent Case 3. $p=2$ and
\[
d(u,v) = d_{\infty}(u,v) = \sup_{x\in [0,1]} |u(x) - v(x)|, \qquad u,v\in C_0([0,1]) .
\]
%

We first give the details for Case 1; Cases 2 and 3 then follow from the same argument with the corresponding estimates in Theorem \ref{260206.2029}. 
Applying Theorem \ref{260206.2029} and Remark \ref{260701.1105} to \eqref{111.3} and \eqref{111.2} gives
\begin{align}
    \sup_{t\in [0,T]}\mathbb{E} \int_{0}^{1} |u(t,x) - v(t,x)| dx  \leq & 2\sqrt{C_1(\alpha)} \left( \mathbb{E} \int_{0}^{T}\int_{0}^{1} |\sigma(u(s,y)) h(s,y) |^2 dsdy \right)^{\frac{1}{2}} ,
\end{align}
where $C_1(\alpha)$ is given by \eqref{260708.0925}. 
In particular, by condition (H3), for any $T\geq 0$,
\begin{align}
    \mathbb{E} \int_{0}^{1} |u(T,x) - v(T,x)| dx  \leq & 2\sqrt{C_1(\alpha)} K_{\sigma} \left( \mathbb{E} \int_{0}^{T}\int_{0}^{1} | h(s,y) |^2 dsdy \right)^{\frac{1}{2}}.
\end{align}
In other words, the law of $u(T)$ on $(C_0([0,1]), d_1)$ satisfies $T_1(C)$ with $C=4 K_{\sigma}^2 C_1(\alpha)=: B_1$, independently of $T$. According to Lemma 2.2 in \cite{DGW04}, the invariant measure $\mu$ also satisfies $T_1(B_1)$.

For the metric $d_2$, assertion (ii) of Theorem \ref{260206.2029} and Remark \ref{260701.1105} give
\begin{align*}
\left(\mathbb{E}d_2(u(T),v(T))^2\right)^{\frac12}
\leq 2C_2(\alpha)K_{\sigma}
\left(\mathbb{E}\int_0^T\int_0^1 h(s,y)^2\,\mathrm{d}s\mathrm{d}y\right)^{\frac12}.
\end{align*}
Thus $C=4K_{\sigma}^2C_2(\alpha)^2=: B_2$ in \eqref{111.1}. Similarly, assertion (iii) gives $C=4K_{\sigma}^2C_{\infty}(\alpha)^2=: B_{\infty}$ for the metric $d_{\infty}$. Applying Lemma 2.2 of \cite{DGW04} in each case proves assertions (ii) and (iii), and hence completes the proof.

\end{proof}

\section{Appendix}

\begin{lemma}\label{260702.1741}
 Let $p_{t}(x,y)$ be the heat kernel of the operator $\frac{1}{2}\partial_{xx}$ on $[0,1]$ with the Dirichlet boundary condition, and
\begin{align*}
 P_t f(x) := \int_{0}^{1} p_{t}(x,y) f(y)dy, \quad x\in [0,1].
\end{align*}
Then for any $p> 1$,
\begin{align}\label{260624.1651}
  \sup_{x\in [0,1]} |P_t f(x)| \leq C_{p\rightarrow\infty} t^{-\frac{1}{2p}}\Vert f\Vert_{L^p([0,1])},
\end{align}
where
\begin{align}\label{260702.1739}
  C_{p\rightarrow\infty}:= (2\pi)^{-\frac1{2p}} \left(\frac{p-1}{p}\right)^{\frac{p-1}{2p}}.
\end{align}
The constant $C_{p\rightarrow\infty}$ can be bounded by
\begin{align}\label{260626.1024}
  C_{p\rightarrow\infty} \leq \left(\frac{1}{2\sqrt{\pi}}\right)^{\frac{1}{p}}.
\end{align}
\end{lemma}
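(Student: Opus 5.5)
The plan is to reduce everything to a pointwise bound on the Dirichlet heat kernel by the free Gaussian kernel, and then apply H\"older's inequality with an explicit computation of a Gaussian $L^{q}$-norm. The starting point is the classical domination
\[
0\le p_t(x,y)\le g_t(x-y):=\frac{1}{\sqrt{2\pi t}}\,e^{-\frac{(x-y)^2}{2t}},\qquad x,y\in[0,1],\ t>0.
\]
This follows from the parabolic maximum principle, or from the probabilistic representation: $p_t(x,y)$ is the transition density of Brownian motion killed on exiting $(0,1)$, and killing only decreases the density.

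With the domination in hand, fix $x\in[0,1]$ and let $q=\frac{p}{p-1}$ be the conjugate exponent. H\"older's inequality gives
\[
|P_tf(x)|\le \|p_t(x,\cdot)\|_{L^q([0,1])}\,\|f\|_{L^p}\le \|g_t\|_{L^q(\mathbb{R})}\,\|f\|_{L^p}.
\]
The Gaussian integral can be computed exactly:
\[
\int_{\mathbb{R}} g_t(z)^q\,dz=(2\pi t)^{-\frac q2}\sqrt{\frac{2\pi t}{q}}=(2\pi t)^{\frac{1-q}{2}}\,q^{-\frac12}.
\]
Raising this to the power $1/q$ and using $\frac{1-q}{2q}=-\frac{1}{2p}$ and $q^{-\frac{1}{2q}}=\bigl(\frac{p-1}{p}\bigr)^{\frac{p-1}{2p}}$, we obtain
\[
\|g_t\|_{L^q(\mathbb{R})}=(2\pi)^{-\frac1{2p}}\Bigl(\frac{p-1}{p}\Bigr)^{\frac{p-1}{2p}}\,t^{-\frac1{2p}}.
\]
This is exactly $C_{p\to\infty}t^{-\frac1{2p}}$. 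Taking the supremum over $x$ yields \eqref{260624.1651}.

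For the bound \eqref{260626.1024}, write $s=\frac1p\in(0,1)$. The claim becomes
\[
(2\pi)^{-\frac s2}(1-s)^{\frac{1-s}{2}}\le (4\pi)^{-\frac s2},
\]
which is equivalent to $(1-s)^{1-s}\le 2^{-s}$, i.e.\ $\phi(s):=(1-s)\log(1-s)+s\log2\le 0$. We have $\phi(0)=\phi(1)=0$, and $\phi''(s)=\frac{1}{1-s}>0$, so $\phi$ is convex on $[0,1]$ and therefore nonpositive there.

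The only step needing care is justifying the domination $p_t\le g_t$ and making sure the constant is not lost when restricting from $\mathbb{R}$ to $[0,1]$. Both are harmless: restricting the domain only decreases the $L^q$-norm, and the domination is standard, for instance via the method of images $p_t(x,y)=\sum_{k\in\mathbb{Z}}\bigl[g_t(x-y+2k)-g_t(x+y+2k)\bigr]$ together with positivity of the killed process. The remaining steps are explicit calculus.
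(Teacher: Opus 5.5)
The first half of your proof, the bound $\sup_x|P_tf(x)|\le C_{p\rightarrow\infty}t^{-\frac{1}{2p}}\Vert f\Vert_{L^p}$, follows the paper's argument. It uses H\"older with the conjugate exponent, Gaussian domination of the Dirichlet kernel (the paper cites the Nash--Aronson bound), extension of the integral to $\mathbb{R}$, and the explicit Gaussian $L^q$-norm. Your computation there is correct.

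The second half, $C_{p\rightarrow\infty}\le(2\sqrt{\pi})^{-1/p}$, contains a genuine error. You claim $\phi(1)=0$, but in fact
\[
\phi(1)=\lim_{s\uparrow 1}\big[(1-s)\log(1-s)\big]+\log 2=\log 2>0 .
\]
On $[0,1)$ the zeros of $\phi$ are $s=0$ and $s=\frac12$, since $\phi(\frac12)=\frac12\log\frac12+\frac12\log 2=0$. Strict convexity ($\phi''=\frac{1}{1-s}>0$) therefore gives $\phi<0$ on $(0,\frac12)$ and $\phi>0$ on $(\frac12,1)$.

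So the inequality $C_{p\rightarrow\infty}\le(2\sqrt{\pi})^{-1/p}$ holds exactly for $p\ge 2$, with equality at $p=2$, and is false for $1<p<2$. For example, as $p\downarrow 1$ one has $C_{p\rightarrow\infty}\to(2\pi)^{-1/2}\approx 0.399$, while $(2\sqrt{\pi})^{-1}\approx 0.282$. No argument can establish the bound for all $p>1$. A sanity check at $p$ near $1$ would have exposed this.

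The repair is immediate: replace $\phi(1)=0$ by $\phi(\frac12)=0$ and conclude $\phi\le 0$ on $[0,\frac12]$, i.e. the bound for $p\ge 2$. That is the only range the paper uses, since Theorem 3.1 and the remark on $K_p$ take $p>2$.

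The paper's own sketch needs $p\ge 2$ for the same reason. It uses $|P_tf|\le (P_t|f|^{p/2})^{2/p}$, which requires convexity of $r\mapsto r^{p/2}$, and then Cauchy--Schwarz with $\int_0^1 p_t(x,y)^2\,dy\le (2\sqrt{\pi t})^{-1}$. That route yields a second sup-norm estimate with constant $(2\sqrt{\pi})^{-1/p}$. Your corrected calculus argument instead compares the two explicit constants directly, which is cleaner and is literally what the displayed bound asserts.
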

\begin{proof}
  By H\"{o}lder's inequality, we have
\begin{align*}
 \sup_{x\in [0,1]} |P_t f(x)| \leq \sup_{x\in [0,1]}\left(\int_{0}^{1} p_{t}(x,y)^{p^{\prime}} dy \right)^{\frac{1}{p^{\prime}}} \left( \int_{0}^{1} |f(y)|^p dy \right)^{\frac{1}{p}},
\end{align*}
where $\frac{1}{p^{\prime}} + \frac{1}{p} =1$.
Recall the well-known Nash--Aronson estimate (see, e.g., \cite{DS26})
\begin{align*}
  0\leq p_t(x,y) \leq \frac{1}{\sqrt{2\pi t}}\exp^{-\frac{(x-y)^2}{2t}}, \quad \forall\, x,y \in [0,1].
\end{align*}
Hence
\begin{align}\label{260624.2049}
\sup_{x\in [0,1]}\left(\int_{0}^{1} p_{t}(x,y)^{p^{\prime}} dy \right)^{\frac{1}{p^{\prime}}}
   \leq & \sup_{x\in [0,1]}\left[ \int_{\mathbb{R}} \left(\frac{1}{\sqrt{2\pi t}}\right)^{p^{\prime}} \exp\left(-\frac{p^{\prime} (x-y)^2}{2t}\right)\,dy \right]^{1/p^{\prime}} \nonumber\\
= & (2\pi)^{-\frac1{2p}} \left(\frac{p-1}{p}\right)^{\frac{p-1}{2p}} t^{-\frac{1}{2p}}.
\end{align}
This proves \eqref{260624.1651}. The upper bound \eqref{260626.1024} can be obtained by
\[
\left|\int_{0}^{1} p_{t}(x,y) f(y)dy\right| \leq \left( \int_0^1 p_{t}(x,y)|f(y)|^{\frac{p}{2}}dy \right)^{\frac{2}{p}} ,
\]
combined with the Cauchy-Schwarz inequality and \eqref{260207.2020}.
\end{proof}

\begin{proposition}\label{260626.1026}
Let $p>2$, let $H$ be a separable real Hilbert space, and let $(e_k)_{k\geq 1}$ be an orthonormal basis of $H$. Let $W$ be an $H$-cylindrical Wiener process.
Let
\[
\Phi: [0,T]\times\Omega \longrightarrow \mathcal{L}\big(H,L^p([0,1])\big)
\]
be an operator-valued progressively measurable process. Assume that
\begin{equation}\label{260627.1538}
\mathbb{E}\int_0^T \Vert G(t)\Vert_{L^p([0,1])}^2\,dt <\infty,
\end{equation}
where
\[
G(t,x):= \bigg( \sum_{k=1}^{\infty} \big|\big(\Phi(t) e_k \big)(x) \big|^2 \bigg)^{1/2}, \qquad x\in[0,1] .
\]
Then
\begin{equation}\label{260625.1026}
\mathbb{E} \left\Vert \int_0^T\Phi (t)\,dW(t) \right\Vert_{L^p([0,1])}^2
\leq (p-1) \mathbb{E}\int_0^T \Vert G(t)\Vert_{L^p([0,1])}^2\,dt.
\end{equation}
\end{proposition}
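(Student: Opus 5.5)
We prove the bound by applying the It\^o formula to the functional $\Psi(f):=\Vert f\Vert_{L^p}^2$ on $L^p([0,1])$, in the spirit of the $2$-smooth Banach space theory, while keeping track of the exact constant $p-1$. By a standard approximation we may first reduce to elementary (finite-rank, simple) integrands. Concretely, we take $\Phi$ piecewise constant in time with values in $\mathrm{span}\{e_1,\dots,e_N\}^\ast\otimes$ (bounded functions), and with $\Vert G\Vert_{L^p}$ bounded. The general case then follows from \eqref{260627.1538}, by the It\^o isometry-type continuity of both sides and Fatou's lemma. For such integrands, $X(t):=\int_0^t\Phi(s)\,dW(s)$ is, for each $x$, a finite sum of real It\^o integrals $X(t,x)=\sum_{k\le N}\int_0^t(\Phi(s)e_k)(x)\,d\beta_k(s)$. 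The $\beta_k$ here are independent Brownian motions, and the quadratic variation is $d\langle X(\cdot,x)\rangle_t=G(t,x)^2dt$.

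The key computation is the It\^o formula for $\Psi(X(t))=\big(\int_0^1|X(t,x)|^p dx\big)^{2/p}$. We write $N(t):=\int_0^1|X(t,x)|^pdx$, which is a real semimartingale by It\^o's formula pointwise in $x$ (since $p>2$, $|\cdot|^p$ is $C^2$) and Fubini:
\[
dN=\text{mart.}+\tfrac{p(p-1)}{2}\int_0^1|X|^{p-2}G^2\,dx\,dt .
\]
We then apply It\^o to $N^{2/p}$. To avoid the singularity at $N=0$, we use $(\varepsilon+N)^{2/p}$ and let $\varepsilon\downarrow0$. The second-order term coming from $\frac{2}{p}(\frac{2}{p}-1)N^{2/p-2}d\langle N\rangle$ is nonpositive because $p>2$, so it can be dropped. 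The remaining drift is
\[
\tfrac{2}{p}N^{\frac{2}{p}-1}\cdot\tfrac{p(p-1)}{2}\int_0^1|X|^{p-2}G^2dx=(p-1)N^{\frac2p-1}\int_0^1|X|^{p-2}G^2dx .
\]
Hölder's inequality with exponents $\frac{p}{p-2}$ and $\frac p2$ gives $\int|X|^{p-2}G^2\le N^{\frac{p-2}{p}}\Vert G\Vert_{L^p}^2$, so the drift is at most $(p-1)\Vert G(t)\Vert_{L^p}^2$.

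After a localization by stopping times, the martingale part has zero expectation. Taking expectations then gives $\mathbb{E}(\varepsilon+N(T))^{2/p}\le\varepsilon^{2/p}+(p-1)\mathbb{E}\int_0^T\Vert G\Vert_{L^p}^2dt$, and letting $\varepsilon\to0$ and removing the localization (monotone convergence/Fatou) yields \eqref{260625.1026}.

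The main obstacle is justifying the Itô formula rigorously. This involves interchanging the pointwise-in-$x$ Itô formula with the $dx$-integral via stochastic Fubini, handling the non-smoothness of $N^{2/p}$ at $0$ (the $\varepsilon$-regularization), and controlling the local martingale terms. For the elementary integrands chosen above, all moments are finite, so these steps are routine. What remains delicate is the final density argument. To pass from elementary $\Phi$ to general $\Phi$, we note that the inequality just proved, applied to differences $\Phi_n-\Phi_m$, shows the stochastic integrals form a Cauchy sequence in $L^2(\Omega;L^p)$ whenever the $\Phi_n\to\Phi$ in the norm defined by the right-hand side of \eqref{260627.1538}. This identifies the limit with $\int_0^T\Phi\,dW$ and transfers the bound.
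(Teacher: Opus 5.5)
Your proposal is correct and follows essentially the same route as the paper: It\^o's formula for the regularized functional $(\varepsilon+\Vert X\Vert_{L^p}^p)^{2/p}$, dropping the nonpositive term that comes from the concavity of $r\mapsto r^{2/p}$, H\"older's inequality to obtain exactly the constant $p-1$, localization, then $\varepsilon\downarrow 0$ and a density argument. The only difference is in the mechanics. The paper writes the elementary integral as $\sum_{i=1}^m X^i(t)f_i$ with fixed $f_i\in L^p$ and applies the classical It\^o formula on $\mathbb{R}^m$ to $F_\varepsilon(z)=(\varepsilon+\Vert\sum_i z_if_i\Vert_{L^p}^p)^{2/p}-\varepsilon^{2/p}$, which avoids your pointwise-in-$x$ It\^o formula and stochastic Fubini step; the negative Hessian term there is exactly your $d\langle N\rangle$ term.
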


\begin{remark}
By Lemma 2.1 of \cite{NVW08}, the condition \eqref{260627.1538} implies that
the definition of $G$ is independent of the choice of the orthonormal basis of $H$, and $\Phi$ is a $\gamma$-radonifying operator from $H$ to $L^p([0,1])$. Moreover, $\Vert \Phi(t)\Vert_{\gamma(H, L^p([0,1]))} \simeq_p \Vert G(t)\Vert_{L^p([0,1])}$.
However, the constant $p-1$ obtained in \eqref{260625.1026} is sharper than that obtained by a direct application of the abstract theory of Banach-space-valued stochastic integration; see \cite{NV20}.
\end{remark}

\begin{proof}
We first prove \eqref{260625.1026} for elementary finite-rank adapted processes $\Phi$. Suppose that there exist $m,N\in\mathbb{N}$, linearly independent functions $f_1,\ldots,f_m\in L^p([0,1])$,
and real-valued adapted processes $a_{ik}(t)$ such that
\[
\Phi(t) e_k = \sum_{i=1}^m a_{ik}(t)f_i, \qquad k=1,\ldots,N,
\]
and $\Phi(t) e_k=0$ for $k>N$.
We may first assume that the processes $a_{ik}$ are bounded.

Set $\beta_k(t):=W_t(e_k)$ for $k\geq 1$.
Then $(\beta_k)_{k\geq1}$ is a family of independent standard Brownian motions.
Define
\[
X^i(t) := \sum_{k=1}^N \int_0^t a_{ik}(r)\,d\beta_k(r), \qquad i=1,\ldots,m,
\]
and $X(t):=(X^1(t),\ldots,X^m(t))\in\mathbb{R}^m$.
For $k=1,\ldots,N$, introduce the vector
\[
a_k(t) := \big(a_{1k}(t),\ldots,a_{mk}(t)\big)
\in\mathbb{R}^m.
\]
Then
\[
dX(t) = \sum_{k=1}^N a_k(t)\,d\beta_k(t).
\]
The $L^p([0,1])$-valued stochastic integral can then be written as
\[
M(t) := \int_0^t\Phi(r)\,dW(r) = \sum_{i=1}^m X^i(t) f_i.
\]

For $z=(z_1,\ldots,z_m)\in\mathbb{R}^m$, set
\[
u_z(\cdot):=\sum_{i=1}^m z_i f_i(\cdot).
\]
For $p>2$ and $\varepsilon>0$, define
\[
F_\varepsilon(z) := \Big( \varepsilon+\int_0^1|u_z(x)|^p\,dx \Big)^{2/p} -\varepsilon^{2/p}, \qquad z\in \mathbb{R}^m ,
\]
and write
\[
R_\varepsilon(z) := \varepsilon+\int_0^1|u_z(x)|^p\,dx.
\]
By the dominated convergence theorem, one can find that $R_{\varepsilon}, F_\varepsilon\in C^2(\mathbb{R}^m;\mathbb{R})$.
Moreover,
\begin{align*}
\frac{\partial F_\varepsilon}{\partial z_i}(z) = 2R_\varepsilon(z)^{\frac2p-1} \int_0^1 |u_z|^{p-2}u_zf_i\,dx,
\end{align*}
and
\begin{align}\label{260627.1630}
\frac{\partial^2 F_\varepsilon}{\partial z_i\partial z_j}(z) ={}& 2(p-1)R_\varepsilon(z)^{\frac{2}{p}-1} \int_0^1|u_z|^{p-2}f_if_j\,dx
\nonumber\\
& - 2(p-2)R_\varepsilon(z)^{\frac{2}{p}-2} \left( \int_0^1|u_z|^{p-2}u_zf_i\,dx \right) \times \left( \int_0^1|u_z|^{p-2}u_zf_j\,dx
\right).
\end{align}

Applying the classical It\^{o} formula to $F_\varepsilon(X(t))$ gives
\begin{align}\label{260627.1502}
F_\varepsilon(X(t)) ={}& \sum_{k=1}^N \int_0^t \langle \nabla F_\varepsilon(X(r)),  a_k(r) \rangle \,d\beta_k(r) \nonumber\\
& + \frac{1}{2} \int_0^t \sum_{k=1}^N a_k (r) \nabla^2 F_\varepsilon(X (r)) a_k^{T}(r) \,dr ,
\end{align}
where $\nabla^2 F_{\varepsilon}$ is the Hessian matrix of $F_{\varepsilon}$.
Here we used $X_0=0$ and $F_\varepsilon(0)=0$.
For $z\in\mathbb{R}^m$ and
$a=(a_1,\ldots,a_m)\in\mathbb{R}^m$, write
\[
h_a(\cdot):=\sum_{i=1}^m a_i f_i(\cdot).
\]
It follows from \eqref{260627.1630} that
\begin{align}\label{260627.1322}
& a \nabla^2 F_\varepsilon(z) a^T = \sum_{i,j=1}^{m} \frac{\partial^2 F_{\varepsilon}}{\partial z_i \partial z_j}(z) a_i a_j \nonumber\\
={}& 2(p-1)R_\varepsilon(z)^{\frac{2}{p}-1} \int_0^1|u_z|^{p-2}|h_a|^2\,dx - 2(p-2)R_\varepsilon(z)^{\frac{2}{p}-2} \left( \int_0^1|u_z|^{p-2}u_zh_a\,dx \right)^2 \nonumber\\
\leq{} & 2(p-1)R_\varepsilon(z)^{\frac{2}{p}-1} \int_0^1|u_z(x)|^{p-2}|h_a(x)|^2\,dx.
\end{align}
%
For every $k=1,\ldots,N$,
\[
h_{a_k(t)} = \sum_{i=1}^m a_{ik}(t)f_i = \Phi(t) e_k.
\]
Consequently, if
\[
G(t,x) := \left( \sum_{k=1}^N \big|\big(\Phi(t) e_k\big)(x)\big|^2 \right)^{1/2},
\]
then summing \eqref{260627.1322} over $k$ yields
\begin{align*}
\sum_{k=1}^N a_k(t) \nabla^2 F_\varepsilon(z) a_k(t) \leq{}& 2(p-1)R_\varepsilon(z)^{\frac2p-1} \int_0^1|u_z(x)|^{p-2}G(t,x)^2\,dx \\
\leq{}& 2(p-1)R_\varepsilon(z)^{\frac2p-1} \Vert u_z\Vert_{L^p}^{p-2} \Vert G(t)\Vert_{L^p}^2 ,
\end{align*}
where we have used H\"older's inequality in the last line. Note that
$R_\varepsilon(z)^{\frac2p-1} \Vert u_z\Vert_{L^p}^{p-2} \leq 1$ for any $z\in \mathbb{R}^m$ and $p>2$.
Hence
\begin{align}\label{260627.1338}
  \frac{1}{2}\sum_{k=1}^N a_k(t) \nabla^2 F_\varepsilon(z) a_k(t) \leq{}& (p-1)R_\varepsilon(z)^{\frac2p-1} \Vert u_z\Vert_{L^p}^{p-2} \Vert G(t)\Vert_{L^p}^2 \leq (p-1)\Vert G(t)\Vert_{L^p}^2 .
\end{align}

To justify taking expectations in \eqref{260627.1502}, define
\[
\tau_n := \inf\Big\{ t\in[0,T]: |X(t)| + \int_0^t\Vert G(r)\Vert_{L^p}^2\,dr \geq n \Big\} \wedge T.
\]
Then $\tau_n\rightarrow T$ almost surely as $n\rightarrow\infty$.
The stopped stochastic integral in \eqref{260627.1502} is a true martingale.
Taking expectations at time $\tau_n$ and using \eqref{260627.1338}, we obtain
\[
\mathbb{E} F_\varepsilon(X(\tau_n)) \leq (p-1) \mathbb{E}\int_0^{\tau_n} \Vert G(t)\Vert_{L^p}^2\,dt
\leq (p-1) \mathbb{E}\int_0^T \Vert G(t)\Vert_{L^p}^2\,dt.
\]
Letting $n\to\infty$ and using Fatou's lemma gives
\[
\mathbb{E} F_\varepsilon(X(T)) \leq (p-1) \mathbb{E}\int_0^T \Vert G(t)\Vert_{L^p}^2\,dt.
\]
Since $F_\varepsilon(X(T)) = \left( \varepsilon+\Vert M(T)\Vert_{L^p}^p \right)^{2/p} -\varepsilon^{2/p}$,
and for every $a\geq0$, $(\varepsilon+a)^{2/p}-\varepsilon^{2/p} \uparrow a^{2/p}$ as $\varepsilon\downarrow 0$,
the monotone convergence theorem yields
\[
\mathbb{E}\Vert M(T)\Vert_{L^p}^2 = \lim_{\varepsilon\rightarrow 0}\mathbb{E} F_\varepsilon(X(T)) \leq (p-1) \mathbb{E}\int_0^T \Vert G(t)\Vert_{L^p}^2\,dt.
\]
This proves \eqref{260625.1026} for finite-rank processes $\Phi$ with bounded coefficients $a_{ik}(t)$.

For an elementary finite-rank process $\Phi$ satisfying \eqref{260627.1538} with unbounded coefficients $a_{ik}(t)$, we set
$a_{ik}^n(t):= (-n)\vee a_{ik}(t) \wedge n$ for $n\geq 1$, and define the corresponding finite-rank process $\Phi^n$. Thus
\[
\mathbb{E}\int_0^T \bigg\Vert \bigg( \sum_{k=1}^{N} \big| \big(\Phi^n(t)-\Phi(t) \big)e_k \big|^2 \bigg)^{1/2} \bigg\Vert_{L^p}^2\,dt \longrightarrow0.
\]
By \eqref{260625.1026},
$\int_0^T \Phi^n(t) \,dW(t)$ is a Cauchy sequence in $L^2(\Omega;L^p([0,1]))$. Passing to the limit shows that \eqref{260625.1026} holds for an elementary finite-rank process $\Phi$ with unbounded coefficients.

For a general operator-valued progressively measurable process $\Phi$
satisfying \eqref{260627.1538},
the density of elementary finite-rank adapted processes (see, e.g., \cite{NV20}) yields a sequence of elementary finite-rank adapted processes $\Phi^n$ such that
\[
\mathbb{E}\int_0^T
\bigg\|
\bigg(
\sum_{k=1}^{\infty} \big|\big(\Phi^n(t)-\Phi(t) \big)e_k \big|^2 \bigg)^{1/2} \bigg\|_{L^p}^2\,dt \longrightarrow0 .
\]
By the same argument as above, we can pass to the limit to obtain \eqref{260625.1026}.
%
%
\end{proof}


%
\vskip 0.6cm
\noindent{\large\bf Acknowledgements.} \quad
This work is partially supported by the National Key R\&D Program of China (No. 2022YFA1006001), the National Natural Science Foundation of China (Nos. 12131019, 12571158), and the Fundamental Research Funds for the Central Universities (No. WK0010000081).

\end{document}